\newcommand{\ignore}[1]{}
\numberwithin{figure}{section}
\numberwithin{table}{section}
\newcommand\tr{\operatorname{tr}}
\renewcommand\div{\operatorname{div}}
\newcommand\curl{\operatorname{curl}}
\newcommand\rot{\operatorname{rot}}
\newcommand\vol{\mathsf{vol}}
\newcommand\R{\mathbb{R}}
\newcommand\eps{e}
\newcommand\E{{\mathcal E}}
\newcommand\Lin{\mathcal L}
\renewcommand\P{{\mathcal P}}
\newcommand\T{{\mathcal T}}
\newcommand{\<}{\langle}
\renewcommand{\>}{\rangle}
\newcommand{\0}{\mathring}
\numberwithin{equation}{section}
\newtheorem{thm}{Theorem}[section]
\newtheorem{lem}[thm]{Lemma}
\begin{document}

\title{Local bounded cochain projections}
\author{Richard S. Falk}
\address{Department of Mathematics,
Rutgers University, Piscataway, NJ 08854}
\email{falk@math.rutgers.edu}
\urladdr{http://www.math.rutgers.edu/\char'176falk/}
\thanks{}
\author{Ragnar Winther}
\address{Centre of Mathematics for Applications
and Department of Mathematics,
University of Oslo, 0316 Oslo, Norway}
\email{ragnar.winther@cma.uio.no}
\urladdr{http://heim.ifi.uio.no/\char'176rwinther/}
\thanks{}
\subjclass[2000]{Primary: 65N30}
\keywords{cochain projections, finite element exterior calculus}
\date{November 19, 2012}
\thanks{The work of the first author was supported in part by NSF grant
DMS-0910540. The work of the second author was supported by the Norwegian
Research Council.}

\begin{abstract}
We construct projections from $H \Lambda^k(\Omega)$, the space of differential
$k$ forms on $\Omega$ which belong to $L^2(\Omega)$ and whose exterior
derivative also belongs to $L^2(\Omega)$, to finite dimensional subspaces of
$H \Lambda^k(\Omega)$ consisting of piecewise polynomial differential forms
defined on a simplicial mesh of $\Omega$.  Thus, their definition requires
less smoothness than assumed for the definition of the canonical
interpolants based on the degrees of freedom. Moreover, these projections have
the properties that they commute with the exterior derivative and are bounded
in the $H \Lambda^k(\Omega)$ norm independent of the mesh size $h$.  Unlike
some other recent work in this direction, the projections are also locally
defined in the sense that they are defined by local operators on overlapping
macroelements, in the spirit of the Cl\'ement interpolant.
\end{abstract}
\maketitle

\section{Introduction}
\label{sec:intro}
Projection operators which commute with the governing differential
operators are key tools for the stability analysis of finite element
methods associated to a differential complex. In fact, such
projections have been a central feature of the analysis of mixed
finite element methods since the beginning of such analysis,
cf. \cite{Brezzi, BrezziFortin}. However, a key difficulty is that,
for most of the standard finite element spaces, the canonical
projection operators defined from the degrees of freedom are not
well--defined on the appropriate function spaces. This is the case for
the Lagrange finite elements, considered as a subspace of the Sobolev
space $H^1$, and for the Raviart-Thomas \cite{Raviart-Thomas},
Brezzi-Douglas-Marini \cite{Brezzi-Douglas-Marini}, and N\'ed\'elec
\cite{Nedelec1,Nedelec2} finite element spaces considered as subspaces
of $H(\div)$ or $H(\curl)$.  For example, the classical continuous
piecewise linear interpolant, based on the values at the vertices of
the mesh, is not defined for functions in $H^1$ in dimensions higher
than one. Therefore, even if the canonical projections commute with
the governing differential operators on smooth functions, these
operators cannot be directly used in a stability argument for the
associated finite element method due to the lack of boundedness of the
projections in the proper operator norms.  In addition to the
canonical projection operators, it is worth mentioning another family
of projection operators that commute with the exterior derivative.
This approach, usually referred to as projection based interpolation, is detailed in
the work of Demkowicz and collaborators (cf. \cite{Cao-Demkowicz},
\cite{Demkowicz}, \cite{Demkowicz-Babuska}, \cite {Demkowicz-Buffa},
\cite{Demkowicz-Kurtz}). The main motivation for the construction of
these operators was the analysis of the so called p--version of the
finite element method, i.e., the focus is on the dependence of the
polynomial degree of the finite element spaces. 
However, as in the case of the canonical projection
operators, the definition of these operators requires some additional
smoothness of the underlying functions, so again they cannot be used
directly in the standard stability arguments.
On the other hand, the classical Cl\'ement interpolant \cite{clement}
is a local operator, and it is well--defined for functions in
$L^2$. However, the Cl\'ement interpolant is not a projection, and the
obvious extensions of the Cl\'ement operator to higher order finite
element differential forms (cf. \cite{acta,bulletin}) do not commute
with the exterior derivative. Therefore, these operators are not
directly suitable for a stability analysis.

Bounded commuting projections have been constructed in previous
work. The first such construction was given by Sch\"{o}berl in
\cite{schoberl}. The idea is to compose a smoothing operator and the
unbounded canonical projection to obtain a bounded operator which maps
the proper function space into the finite element space. In order to
obtain a projection, one composes the resulting operator with the
inverse of this operator restricted to the finite element
space. In \cite{schoberl}, a perturbation of the finite element space
itself was used to construct the proper smoother.
In a related paper, Christiansen \cite{snorre2007} proposed to use a more standard
smoothing operator defined by a mollifier function. Using this idea, 
variants of Sch\"{o}berl's construction are analyzed in \cite[Section 5]{acta},
\cite[Section 5]{bulletin}, and \cite{c-w}.   
The constructed
projections commute with the exterior derivative and they are bounded
in $L^2$. Therefore, they can be used to establish stability of finite
element methods.  However, these projections lack another key property
of the canonical projections; they are not locally defined. In fact,
up to now it has been an open question if it is possible to construct
bounded and commuting projections which are locally defined. The
projections defined in this paper have all these properties. The
construction presented below resembles the construction of the
Cl\'ement operator in the sense that it is based on local operators on
overlapping macroelements.

We will adopt the language of finite element exterior calculus as in
\cite{acta,bulletin}.  The theory presented in these papers may be
described as follows. Let $\Omega \subset \R^n$ be a bounded
polyhedral domain, and let $H\Lambda^k(\Omega)$ be the space of
differential $k$ forms $u$ on $\Omega$, which is in $L^2$, and where
its exterior derivative, $du = d^ku$, is also in $L^2$.  This space is
a Hilbert space.  The $L^2$ version of the de Rham complex then takes
the form
 \[
H\Lambda^0(\Omega)
\xrightarrow{d} H\Lambda^1(\Omega) \xrightarrow{d}
\cdots \xrightarrow{d}
H\Lambda^n(\Omega).
\]
The basic construction in finite element exterior calculus is
of a corresponding subcomplex
\[
\Lambda_h^0\xrightarrow{d} \Lambda_h^1 \xrightarrow{d}
\cdots \xrightarrow{d}
\Lambda_h^n,
\]
where the spaces $\Lambda_h^k$ are finite dimensional subspaces of
$H\Lambda^k(\Omega)$ consisting of piecewise polynomial differential forms
with respect to a partition, $\T_h$, of the domain $\Omega$.
In the theoretical
analysis of the stability of numerical methods constructed from this discrete
complex, bounded projections $\pi_h^k : H\Lambda^k(\Omega) \to \Lambda_h^k$ are
utilized, such that the diagram
\begin{equation*}
\begin{CD}
H\Lambda^0(\Omega) @>d>> H\Lambda^1(\Omega) @>d>>
\cdots @>d>> H\Lambda^n(\Omega) \\
 @VV\pi_h^0V @VV\pi_h^1V @.  @VV\pi_h^nV \\
\Lambda^0_h @>d>>
\Lambda^1_h @>d>>
\cdots @>d>> \Lambda^n_h 
\end{CD}
\end{equation*}
commutes. Such commuting projections are referred to as cochain
projections.  The importance of bounded cochain projections is
immediately seen from the analysis of the mixed finite element
approximation of the associated Hodge Laplacian.  In fact, it follows
from the results of \cite[Section 3.3]{bulletin} that the existence of
bounded cochain projections is equivalent to stability of the
associated finite element method.  Furthermore, if these projections
are local, like the ones we construct here, then improved properties
with respect to error estimates and adaptivity may be obtained.

For a general reference to finite element exterior calculus, we refer
to the survey papers \cite{acta,bulletin}, and references given
therein.  As is shown there, the spaces $\Lambda^k_h$ are taken from
two main families.  Either $\Lambda^k_h$ is of the form
$\P_r\Lambda^k(\T_h)$, consisting of all elements of
$H\Lambda^k(\Omega)$ which restrict to polynomial $k$-forms of degree
at most $r$ on each simplex $T$ in the partition $\T_h$, or
$\Lambda^k_h=\P^-_r\Lambda^k(\T_h)$, which is a space which sits
between $\P_r\Lambda^k(\T_h)$ and $\P_{r-1}\Lambda^k(\T_h)$ (the exact
definition will be recalled below).  These spaces are generalizations
of the Raviart-Thomas and Brezzi-Douglas-Marini spaces, used to
discretize $H(\div)$ and $H(\rot)$ in two space dimensions, and the
N\'ed\'elec edge and face spaces of the first and second kind, used to
discretize $H(\curl)$ and $H(\div)$ in three space dimensions.

A main feature of the construction of the projections given below is
that they are based on a direct sum geometrical decomposition of the
finite element space. In the general case of finite element
differential forms, such a decomposition was constructed in
\cite{decomp}. However, this is a standard concept in the case of
Lagrange finite elements. Let $\T_h$ be a simplicial triangulation of
a polyhedral domain $\Omega \in \R^n$. If $T$ is a simplex we let
$\Delta(T)$ be the set of all subsimplexes of $T$, and by
$\Delta_m(T)$ all subsimplexes of dimension $m$. So if $T$ is a
tetrahedron in $\R^3$, then $\Delta_m(T)$ are the set of vertices,
edges, and faces of $T$ for $m=0,1,2$, respectively.  We further
denote by $\Delta(\T_h)$ the set of all subsimplices of all dimensions
of the triangulation $\T_h$, and correspondingly by $\Delta_m(\T_h)$
the set of all subsimplices of dimension $m$.  The desired geometric
decomposition of the spaces $\P_r\Lambda^k(\T_h)$ and
$\P_r^-\Lambda^k(\T_h)$ is based on the property that the elements of
these spaces are uniquely determined by their trace, $\tr_f$, for all
$f$ of $\Delta(\T_h)$ with dimension greater or equal to $k$.  The
decompositions of the spaces $\P_r\Lambda^k(\T_h)$ established in
\cite{decomp}, is then of the form
\begin{equation}\label{decomp-basic}
P_r\Lambda^k(\T_h) = \bigoplus_{\stackrel{f \in
  \Delta(\T_h)}{\dim f \ge k}}E_{f,r}^k(\0\P_r\Lambda^k(f)).
\end{equation}
Here $\0\P_r\Lambda^k(f))$ is the subspace of $\P_r\Lambda^k(f))$
consisting of elements with vanishing trace on the boundary of $f$.
The operator $E_{f,r}^k : \0\P_r\Lambda^k(f) \to P_r\Lambda^k(\T_h)$
is an extension operator in the sense that $\tr_f\circ E_{f,r}^k$ is
the identity operator on $\0\P_r\Lambda^k(f))$. Furthermore,
$E_{f,r}^k$
is local in the sense that the support of functions in
$E_{f,r}^k(\0\P_r\Lambda^k(f))$ is restricted to  the union of the
elements of $\T_h$
which have $f$ as a subsimplex. A completely analog decomposition
\begin{equation}\label{decomp-basic-}
P_r^-\Lambda^k(\T_h) = \bigoplus_{\stackrel{f \in
  \Delta(\T_h)}{\dim f \ge k}}E_{f,r}^{k-}(\0\P_r^-\Lambda^k(f))
\end{equation}
exists for the space $P_r^-\Lambda^k(\T_h)$.

We will utilize modifications of the decompositions
\eqref{decomp-basic}
and
\eqref{decomp-basic-} to construct local bounded cochain projections
onto the finite element spaces $P_r\Lambda^k(\T_h)$
and $P_r^-\Lambda^k(\T_h)$. In the spirit of the Cl\'ement operator
we will use local projections to define the operators  $\tr_f \circ
\pi_h^k$ for each $f \in \Delta(\T_h)$ with
dimension greater or equal to $k$. To make sure that the projections $\pi_h^k$
commute with the exterior derivative we will use
a local Hodge Laplace problem to define the local projections,
while the extension operators will be of the form of
harmonic extension operators.

This paper is organized as follows. In Section~\ref{notat-prelim} we
introduce some basic notation, and we show how to construct the new
projection in the case of scalar valued functions, or zero forms.  We
also review some basic results on differential forms and their finite
element approximations.  A key step of the theory below is to
construct a special projection into the space of Whitney forms
\cite{whitney}, i.e., the space $\P_1^-\Lambda^k(\T_h)$. In fact, in
the present setting the construction in this lowest order case is in
some sense the most difficult part of the theory, since here we need
to relate local operators defined on different subdomains.
To achieve this we utilize a structure which resembles the \v{C}ech-de
Rham double complex, cf. \cite{Bott-Tu}. 
In addition
to being a projection onto the Whitney forms, the special projection
constructed in Section~\ref{sec:whitney} will also satisfy a mean
value property with respect to higher order finite element spaces,
cf. equation \eqref{R-key-prop} below.  The general construction of
the cochain projections, covering all spaces of the form
$P_r\Lambda^k(\T_h)$ or $P_r^-\Lambda^k(\T_h)$, is then performed in
Section~\ref{general}. Finally, in Section~\ref{bounds} we derive
precise local bounds for the constructed projections.

\section{Notation and preliminaries}
\label{notat-prelim}
We will use $\< \cdot, \cdot \>$ to denote $L^2$ inner products on
the domain $\Omega$. For subdomains $D \subset \Omega$ we will use a subscript
to indicate the domain, i.e., we write
$\< u, v \>_{D}$ to denote $L^2$ inner product on the domain $D$.

We will assume that $\{\T_h \}$ is a family of simplicial triangulations
of $\Omega \in \R^n$, indexed by the mesh parameter
$h = \max_{T \in \T_h} h_T$, where $h_T$ is the diameter of $T$.
In fact, $h_f$ will be used to denote the diameter of any $f \in \Delta(\T_h)$.
We will assume throughout that the triangulation is
shape regular, i.e. the ratio $h_T^n/|T|$ is uniformly bounded
for all the simplices $T \in \T_h$ and all triangulations of the family.
Here $|T|$ denotes the volume of $T$. Note that it is a simple
consequence of
shape regularity that the ratio $h_T/h_f$, for $f \in \Delta(T)$ with
$\dim f \ge 1$ is also uniformly bounded. 
We will use $[x_0,x_1, \ldots x_k]$ to denote the convex combination 
of the points $x_0,x_1, \ldots ,x_k \in \Omega$. 
Hence, any $f \in \Delta_k(\T_h)$ is of the form $f = [x_0,x_1, \ldots
x_k]$, where
$x_0,x_1, \ldots ,x_k \in \Delta_0(\T_h)$. Furthermore, the order of
the points $x_j$ reflects  the orientation of the manifold $f$.
We will let $f_j \in \Delta_{k-1}(\T_h)$ denote the subcomplex of $f$ 
obtained by deleting 
the vertex $x_j$, i.e., $f_j = [x_0, \ldots, x_{j-1}, \hat x_j,
x_{j+1}, \cdots x_{k}]$. Here the symbol $\ \widehat{} \ $ over a term 
means that the term is omitted.
Hence, if $j$ is even then $f_j$ has the orientation induced from $f$, while
if the orientation is reversed if $j$ is odd. 

For each $f \in \Delta(\T_h)$, we let $\Omega_f$ be the
associated macroelement consisting of the union of the elements of $\T_h$
containing $f$, i.e.,
\[
\Omega_f = \bigcup \{T \, | \, T \in \T_h, \, f \in \Delta(T) \, \}.
\]


\begin{center}
Fig. 1: Vertex macroelement, $n=2$. \qquad \qquad Edge macroelement, $n=2$.
\end{center}
\vskip.75in
\setlength{\unitlength}{0.35cm}
\begin{center}
\begin{picture}(30,0)
\put(0,0){\line(1,0){15}}
\put(0,0){\line(1,1){5}}
\put(0,0){\line(1,-1){5}}
\put(5,5){\line(1,0){5}}
\put(5,-5){\line(1,0){5}}
\put(15,0){\line(-1,1){5}}
\put(15,0){\line(-1,-1){5}}
\put(7.5,0){\line(-1,2){2.5}}
\put(7.5,0){\line(1,2){2.5}}
\put(7.5,0){\line(-1,-2){2.5}}
\put(7.5,0){\line(1,-2){2.5}}
\put(20,0){\line(1,-1){5}}
\put(20,0){\line(1,1){5}}
\put(20,0){\line(1,0){10}}
\put(30,0){\line(-1,1){5}}
\put(30,0){\line(-1,-1){5}}
\end{picture}
\end{center}
\vskip.75in
In addition to macroelements $\Omega_f$ we will also find it
convenient to introduce
the notion of an extended macroelement $\Omega_f^{\eps}$
defined for $f \in \Delta(\T_h)$ by
\[
\Omega_f^{\eps} = \bigcup_{g \in \Delta_{0}(f)}\Omega_g.
\]
\begin{center}
Fig. 3: The extended macroelement $\Omega_f^e$ corresponding to the
union of the
two macroelements $\Omega_{g_0}$
(outlined by the thick lines)
and $\Omega_{g_1}$, $n=2$.
\end{center}

\vskip.75in

\setlength{\unitlength}{0.5cm}
\begin{center}
\begin{picture}(0,0)
\put(0,0){\line(2,3){2}}
\put(0,0){\line(-2,3){2}}
\put(0,0){\line(-2,-3){2}}
\put(0,0){\line(2,-3){2}}
\put(-4,0){\line(1,0){8}}
\put(4,0){\line(1,0){4}}
\put(4,0){\line(2,-3){2}}
\put(4,0){\line(2,3){2}}
\put(2,3){\line(1,0){4}}
\put(2,-3){\line(1,0){4}}
\put(8,0){\line(-2,3){2}}
\put(8,0){\line(-2,-3){2}}
\put(1.8,-.6){$f$}
\put(-.3,-.8){$g_0$}
\put(3.7,-.8){$g_1$}
\thicklines
\put(-4,0){\line(2,3){2}}
\put(-4,0){\line(2,-3){2}}
\put(4,0){\line(-2,3){2}}
\put(4,0){\line(-2,-3){2}}
\put(-2,3){\line(1,0){4}}
\put(-2,-3){\line(1,0){4}}
\end{picture}
\end{center}

\vskip.75in
In the special case that $\dim f = 0$, i.e., $f$ is a vertex, then
$\Omega_f^{\eps} = \Omega_f$. In general, if $f,g \in \Delta(\T_h)$
with $g \in \Delta(f)$ then
\[
\Omega_f \subset \Omega_g \quad \text{and } \Omega_g^e \subset
\Omega_f^e.
\]
We shall assume throughout that all the 
macroelements of the form $\Omega_f$ and $\Omega_f^e$, for 
$f \in \Delta(\T_h)$,  are contractive.
We let $\T_{f,h}$ denote the restriction of $\T_h$ to $\Omega_f$,
while $\T_{f,h}^e$ is the corresponding restriction of $\T_h$ to $\Omega_f^e$.
It is straightforward to check that a consequence of the
shape regularity of the family
$\{\T_h \}$ is that the ratio $|\Omega_f^e|/|\Omega_f|$ is uniformly
bounded. Furthermore, the coverings $\{\Omega_f \}_{f \in
  \Delta(\T_h)}$ and  $\{\Omega_f^e \}_{f \in
  \Delta(\T_h)}$ of the domain $\Omega$ both have the bounded
overlap property, i.e., the sum of the characteristic functions is
bounded uniformly in $h$.

\subsection{Construction of the projection for scalar valued functions}
\label{ord-fcns}
To motivate the construction for the general case of $k$ forms given
below, we will first give an outline of how the projection is
constructed for zero forms, i.e. for scalar valued functions.  The
projection $\pi_h^0$ will map the space $H^1(\Omega) =
H\Lambda^0(\Omega)$ into $\P_r\Lambda^0(\T_h)$, the space of
continuous piecewise polynomials of degree $r$ with respect to the
partition $\T_h$.  The space $\P_r\Lambda^0(\T_{f,h})$ is the
restriction of the space $\P_r\Lambda^0(\T_h)$ to $\T_{f,h}$, and
$\0\P_r\Lambda^0(\T_{f,h})$ is the subspace of
$\P_r\Lambda^0(\T_{f,h})$ of functions which vanish on the boundary of
$\Omega_f$, $\partial \Omega_f$.  Of course, by the zero extension the
space $\0\P_r\Lambda^0(\T_{f,h})$ can also be considered as a subspace
of $\P_r\Lambda^0(\T_h)$.

A key tool for the construction is the local projection
$P_f^0 : H^1(\Omega_f) \to \P_r(\T_{f,h})$, associated to each  $f \in
\Delta(\T_h)$.
If $\dim f = 0$, such that $f$ is a vertex, we define $P_f^0$ by
$P_f^0 u \in \P_r\Lambda^0(\T_{f,h})$ as the $H^1$ projection of $u$, i.e.,
$P_f^0 u$ is the solution of:
\begin{align*}
\langle P_f^0 u, 1 \rangle_{\Omega_f} &= \langle u, 1 \rangle_{\Omega_f},
\\
\langle d P_f^0 u, d v \rangle_{\Omega_f} &=
\langle d u, d v \rangle_{\Omega_f}, \quad v \in \P_r(\T_{f,h}).
\end{align*}
Of course, for zero forms the exterior derivative, $d$, can be
identified with the ordinary gradient operator.
When $1 \le \dim f \le n$, we first define the space
\[
\breve\P_r\Lambda^0(\T_{f,h}) = \{u \in \P_r\Lambda^0(\T_{f,h})\, |\,  
\tr_f u \in \0\P_r(f) \,
\}.
\]
We then define $P_f^0 u \in \breve \P_r\Lambda^0(\T_{f,h})$ as the solution of
\begin{equation*}
\langle d P_f^0 u, d v \rangle_{\Omega_f} =
\langle d u, d v \rangle_{\Omega_f}, \quad v \in \breve \P_r\Lambda^0(\T_{f,h}).
\end{equation*}
The projection $\pi_h^0$ will be defined recursively with respect to
the dimensions of the subsimplices of the triangulation $\T_h$.
More precisely, we will utilize a sequence of local operators
$\{\pi_{m,h}^0\}_{m=0}^n$, and define $\pi_h^0 = \pi_{n,h}^0$.
Dropping the dependence on $h$, the operators $\pi_m^0$ are
defined recursively by
\begin{equation}\label{pi-m-0-recursion}
\pi_{m}^0 u = \pi_{m-1}^0 u + \sum_{f \in \Delta_m(\T_h)} E_{f}^0 \tr_f
P_{f}^0(u -  \pi_{m-1}^0 u), \quad 1 \le m  \le n.
\end{equation}
Here $E_f^0 : \0\P_r(f) \to \0\P_r\Lambda^0(\T_{f,h}) 
\subset \P_r\Lambda^0(\T_h)$ is
the harmonic extension operator determined by
\[
\langle d E_f^0 \phi, d v \rangle_{\Omega_f} = 0,
\quad v \in \0\P_r\Lambda^0(\T_{f,h}), \tr_f v = 0,
\]
and that $\tr_f E_f^0$ is the identity on $\0\P_r(f)$. We observe that
the dependency of the operator $E_f^0$ on the degree $r$ is
suppressed. It is a key property that $\tr_g E_f^0 \phi = 0$
for all $g \in \Delta(\T_h)$, $\dim g \le \dim f$, and $g \neq f$.
For the vertex degrees of freedom we will use an alternative extension
operator. We simply define $\pi_{0,h}^0 = \pi_0^0$ by
\[
\pi_0^0 u = \sum_{f \in \Delta_0(\T_h)} \E_{f}^0 \tr_f P_{f}^0 u
= \sum_{f \in \Delta_0(\T_h)} \E_{f}^0 (P_f^0 u)(f)
\]
where, for any $\alpha \in \R$, $\E_f^0\alpha$ is the piecewise linear
function with value $\alpha$ at the vertex $f$ and value zero at all
other vertices. Hence, for $f \in \Delta_0(\T_h)$ we have $\E_f^0 =
E_f^0$ if $r =1$. The reason for choosing the special low order extension
operator for vertices is not essential at this point, but will be
needed later to make sure that the projections $\pi_h^k$
commute with the exterior derivative.

The key result for the construction above is that the operator
$\pi_h^0$ is a projection.

\begin{lem}
The operator $\pi_h^0$ is a projection onto $\P_r\Lambda^0(\T_h)$.
\end{lem}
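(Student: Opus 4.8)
The plan is to show two things: first, that $\pi_h^0 u \in \P_r\Lambda^0(\T_h)$ for every $u \in H^1(\Omega)$, so that $\pi_h^0$ really maps into the claimed finite element space; and second, that $\pi_h^0 v = v$ for every $v \in \P_r\Lambda^0(\T_h)$. The first point is essentially built into the construction, since each summand $E_f^0 \tr_f P_f^0(\cdot)$ lies in $\0\P_r\Lambda^0(\T_{f,h}) \subset \P_r\Lambda^0(\T_h)$ and the vertex term lies in the piecewise linears $\subset \P_r\Lambda^0(\T_h)$; summing over $f$ and unwinding the recursion \eqref{pi-m-0-recursion} keeps us inside $\P_r\Lambda^0(\T_h)$. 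So the substance of the lemma is the idempotency statement $\pi_h^0 v = v$ for $v \in \P_r\Lambda^0(\T_h)$.

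\emph{Key step: the trace identity.} The natural route is to prove, by induction on $m$, the stronger ``partial-reproduction'' statement that for $v \in \P_r\Lambda^0(\T_h)$ one has $\tr_g \pi_m^0 v = \tr_g v$ for every $g \in \Delta(\T_h)$ with $\dim g \le m$. Since the elements of $\P_r\Lambda^0(\T_h)$ are uniquely determined by their traces on all subsimplices (the geometric-decomposition property recalled around \eqref{decomp-basic}), the case $m = n$ then forces $\pi_n^0 v = \pi_h^0 v = v$. For the base case $m=0$, I would check that $\pi_0^0 v$ and $v$ agree at every vertex: the vertex extension $\E_f^0$ is designed so that $\pi_0^0 v = \sum_f \E_f^0 (P_f^0 v)(f)$ has nodal value $(P_f^0 v)(f)$ at $f$, and since $v|_{\Omega_f} \in \P_r(\T_{f,h})$ the local $H^1$ projection $P_f^0$ reproduces $v$ on $\Omega_f$, so $(P_f^0 v)(f) = v(f)$.

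\emph{Induction step.} Assume $\tr_g \pi_{m-1}^0 v = \tr_g v$ for all $g$ with $\dim g \le m-1$. Write $w = v - \pi_{m-1}^0 v$, so by hypothesis $\tr_g w = 0$ for all such $g$. The point is to understand $\tr_g \pi_m^0 v$ for $\dim g \le m$. Two facts do the work. First, the extension operators satisfy $\tr_g E_f^0 \phi = 0$ whenever $\dim g \le \dim f$ and $g \neq f$ (the ``key property'' stated just below the definition of $E_f^0$); combined with the inductive hypothesis this shows that on a given $g \in \Delta_m(\T_h)$ only the single term $f = g$ in the sum $\sum_{f \in \Delta_m(\T_h)}$ can contribute, and the lower simplices $\dim g < m$ are untouched by the $m$-th correction so they inherit $\tr_g \pi_m^0 v = \tr_g \pi_{m-1}^0 v = \tr_g v$. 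Second, for $\dim g = m$ one computes
\[
\tr_g \pi_m^0 v = \tr_g \pi_{m-1}^0 v + \tr_g E_g^0 \tr_g P_g^0 w
= \tr_g \pi_{m-1}^0 v + \tr_g P_g^0 w,
\]
using $\tr_g E_g^0 = \id$, and it remains to show $\tr_g P_g^0 w = \tr_g w + (\tr_g v - \tr_g \pi_{m-1}^0 v) = \tr_g v - \tr_g\pi_{m-1}^0 v$, i.e. that the local problem defining $P_g^0$ reproduces the correct trace. \textbf{This is the main obstacle:} because $P_g^0 w$ is defined (for $\dim g \ge 1$) only through the Dirichlet-type variational problem $\langle d P_g^0 w, d\psi\rangle_{\Omega_g} = \langle dw, d\psi\rangle_{\Omega_g}$ over $\breve\P_r\Lambda^0(\T_{g,h})$, one must argue that on the finite element space this local projection, followed by $\tr_g$, recovers the trace of $w$. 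The clean way is to note that $w|_{\Omega_g} \in \P_r\Lambda^0(\T_{g,h})$ with $\tr_{g'}w = 0$ for all proper subsimplices $g'$ of $g$ (so $\tr_g w \in \0\P_r(g)$, placing $w|_{\Omega_g}$ in the space $\breve\P_r\Lambda^0(\T_{g,h})$ over which the problem is posed); hence $w|_{\Omega_g}$ is itself an admissible solution of the local problem, and uniqueness of the solution modulo the trace-zero kernel, together with the constraint $\tr_g P_g^0 w \in \0\P_r(g)$, yields $\tr_g P_g^0 w = \tr_g w$. I would therefore isolate and prove a short auxiliary claim: \emph{if $w|_{\Omega_g}$ lies in $\breve\P_r\Lambda^0(\T_{g,h})$ then $\tr_g P_g^0 w = \tr_g w$}, which is exactly the local idempotency of $P_g^0$ on its target space, and then feed it into the induction. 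Assembling the base case and the induction step completes the proof that $\pi_h^0 v = v$ on $\P_r\Lambda^0(\T_h)$, and together with the mapping property this establishes that $\pi_h^0$ is a projection onto $\P_r\Lambda^0(\T_h)$.
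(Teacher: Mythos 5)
Your proposal is correct and follows essentially the same route as the paper: induction on the dimension $m$ of the subsimplices, with the key observation that $w = v - \pi_{m-1}^0 v$ restricted to $\Omega_f$ lies in $\breve\P_r\Lambda^0(\T_{f,h})$, so the local projection $P_f^0$ reproduces it exactly. The auxiliary idempotency claim you isolate is precisely the step the paper states as ``$\tr_f(u-\pi_{m-1}^0u)\in\0\P_r(f)$, and therefore $P_f^0(u-\pi_{m-1}^0u)=u-\pi_{m-1}^0u$,'' so you have simply made explicit what the paper leaves implicit.
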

\begin{proof}
To see that $\pi^0= \pi_h^0$ is a projection, we only need to check 
that if $u \in
\P_r\Lambda^0(\T_h)$, then for all $f \in \Delta(\T_h)$, $\tr_f \pi^0 u = \tr_f
u$.  We do this by induction on $m$, where $m$ corresponds to the
dimension of the face $f \in \Delta(\T_h)$.
We assume throughout that $u \in \P_r\Lambda^0(\T_h)$.
We will show that the operator $\pi_m^0$ has the property that
\begin{equation}\label{prop-pi_m^0}
\tr_f \pi_m^0 u = \tr_f u \quad \text{if } f \in \Delta(\T_h) \quad
\text{with } \dim f
\le m,
\end{equation}
and since $\pi^0 = \pi_n^0$ this will establish the desired result.
If $f \in \Delta_0(\T_h)$
then $P_h^0u = u|_{\Omega_f}$. By construction, it therefore follows
that \eqref{prop-pi_m^0} holds for $m=0$.
Assume next that \eqref{prop-pi_m^0} holds for $m -1$, where $1 \le m \le
n$. It follows that for any $f \in \Delta_{m}(\T_h)$
we have $\tr_f(u - \pi_{m-1}^0 u) \in \0\P_r(f)$, and therefore
$P_f^0(u - \pi_{m-1}^0 u) = u - \pi_{m-1}^0 u$.
It follows by construction that
$\tr_g \pi_m^0u = \tr_g \pi_{m-1}^0u = \tr_g u$ for $g \in \Delta(\T_h)$, with
$\dim g < m$, while  for $f \in \Delta_m(\T_h)$ we have
\[
\tr_f \pi_m^0 u = \tr_f (\pi_{m-1}^0 u + P_f^0(u - \pi_{m-1}^0 u) =
\tr_f u.
\]
Therefore, \eqref{prop-pi_m^0} holds for $m$ and the proof is completed.
\end{proof}

It follows from the construction above that the operator $\pi_h^0$
is local. For example, for any $T \in \T_h$ we have that
$(\pi_{0,h}^0u)_T$ depends only on $u$ restricted to the extended
macroelement $\Omega_T^e$. Define 
$D_{m,T} \subset \Omega$
by 
\begin{equation}\label{D-recursion}
D_{m,T} = \cup \{\, D_{m-1,T'} \, |\, T' \in \T_{f,h}, \, f \in \Delta_m(T) \,
\}, \quad D_{0,T} = \Omega_T^e.
\end{equation}
It follows from \eqref{pi-m-0-recursion}
that $(\pi_{m,h}u)|_T$ depends only on $u|_{D_{m,T}}$. In particular,
$(\pi_h^0u)|_T$
depends only on $u|_{D_T}$, where $D_T = D_{n,T}$.

The operator $\pi_h^0$ satisfies the following local estimate. 

\begin{thm}
\label{bound0}
Let $T \in \T_h$.
The operator $\pi_h^0$ satisfies the bounds
\[
\|\pi_h^0 u\|_{L^2(T)} \le C (\| u \|_{L^2(D_T)} +
h_T \| d u \|_{L^2(D_T)})
\]
and
\[
\|d \pi_h^0 u\|_{L^2(T)} \le C \| d u \|_{L^2(D_T)},
\]
where the constant $C$ is independent of $h$ and $T \in \T_h$.
\end{thm}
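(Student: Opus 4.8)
The plan is to reduce the global estimate to uniform local bounds for the three building blocks of the recursion \eqref{pi-m-0-recursion}—the local projections $P_f^0$, the harmonic extensions $E_f^0$, and the vertex extensions $\E_f^0$—and then to propagate these bounds through the recursion by induction on $m$, tracking the error $e_m = u - \pi_m^0 u$. All the local bounds are obtained by transporting each macroelement $\Omega_f$ to a reference configuration: by shape regularity the rescaled macroelements form a compact family, so on the finite-dimensional polynomial spaces involved all norms are equivalent with constants depending only on $r$ and the shape-regularity constant, and the $h_f$-dependence is recovered by the scalings $\|v\|_{L^2(\Omega_f)}\sim h_f^{n/2}\|\hat v\|_{L^2(\hat\Omega)}$ and $\|dv\|_{L^2(\Omega_f)}\sim h_f^{n/2-1}\|d\hat v\|_{L^2(\hat\Omega)}$. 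Since $h_f\sim h_T$ for every $f\in\Delta(T)$ with $\dim f\ge1$, and since the supports enjoy the bounded-overlap property, only the boundedly many subsimplices $f\in\Delta_m(T)$ contribute to a fixed $T$ at level $m$.

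The local ingredients I would establish first are: on each macroelement the Poincar\'e/Friedrichs inequalities $\|v-\bar v\|_{L^2(\Omega_f)}\le Ch_f\|dv\|_{L^2(\Omega_f)}$ for $v\in H^1(\Omega_f)$, and $\|v\|_{L^2(\Omega_f)}\le Ch_f\|dv\|_{L^2(\Omega_f)}$ for $v\in\breve\P_r\Lambda^0(\T_{f,h})$—the latter valid because the only constant whose trace on $f$ lies in $\0\P_r(f)$ is $0$, so $\|d\cdot\|$ is a norm on $\breve\P_r\Lambda^0(\T_{f,h})$. Because $P_f^0$ minimizes the Dirichlet energy subject to its constraint, it is a contraction in the seminorm, $\|dP_f^0 v\|_{L^2(\Omega_f)}\le\|dv\|_{L^2(\Omega_f)}$. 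Scaling to the reference configuration together with a discrete trace bound $\|\tr_f w\|\le C\|w\|_{L^2(\Omega_f)}$ on the finite-dimensional space gives $\|E_f^0\tr_f w\|_{L^2(\Omega_f)}\le C\|w\|_{L^2(\Omega_f)}$ and $\|dE_f^0\tr_f w\|_{L^2(\Omega_f)}\le Ch_f^{-1}\|w\|_{L^2(\Omega_f)}$. The crucial observation is that for $1\le m\le n$ and $f\in\Delta_m(\T_h)$ one has $P_f^0 e_{m-1}\in\breve\P_r\Lambda^0(\T_{f,h})$, so the Friedrichs inequality yields $\|P_f^0 e_{m-1}\|_{L^2(\Omega_f)}\le Ch_f\|de_{m-1}\|_{L^2(\Omega_f)}$; combining this with the two extension bounds gives the clean single-level estimates $\|E_f^0\tr_f P_f^0 e_{m-1}\|_{L^2(\Omega_f)}\le Ch_f\|de_{m-1}\|_{L^2(\Omega_f)}$ and $\|dE_f^0\tr_f P_f^0 e_{m-1}\|_{L^2(\Omega_f)}\le C\|de_{m-1}\|_{L^2(\Omega_f)}$.

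With these in hand I would run the induction on $m$. Writing $de_m = de_{m-1} - \sum_{f\in\Delta_m(\T_h)}dE_f^0\tr_f P_f^0 e_{m-1}$ and restricting to a fixed $T$, the single-level $d$-estimate and bounded overlap give $\|de_m\|_{L^2(T)}\le C\|de_{m-1}\|_{L^2(D_{m,T})}$, while the $L^2$ version gives $\|e_m\|_{L^2(T)}\le\|e_{m-1}\|_{L^2(T)} + Ch_T\|de_{m-1}\|_{L^2(D_{m,T})}$; iterating from $m=1$ to $n$ and using $\|de_{m-1}\|\le C\|du\|$ reduces everything to the base level $m=0$. Since $\pi_h^0 u = u - e_n$, the two desired bounds then follow from the corresponding bounds on $e_n$, the domain being $D_T = D_{n,T}$ as in \eqref{D-recursion}.

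The base case $m=0$ is where I expect the real work, and the main obstacle, to lie. Here $\pi_0^0 u = \sum_{f\in\Delta_0(\T_h)}\E_f^0(P_f^0 u)(f)$ is piecewise linear with nodal values $\alpha_f = (P_f^0 u)(f)$, and controlling $\|d\pi_0^0 u\|_{L^2(T)}$ by $\|du\|$ (rather than by the inadmissible $h_T^{-1}\|u\|$) requires a Cl\'ement-type argument: for neighbouring vertices $f,g$ of $T$ one must show $|\alpha_f - \alpha_g|\le Ch_T^{1-n/2}\|du\|_{L^2(R)}$ on a common neighbourhood $R$ of size $h_T$. This is precisely where the vertex projection $P_f^0$, which preserves the mean of $u$ over $\Omega_f$, and the special low-order extension $\E_f^0$ are used: inserting a common local mean $\bar u_R$, the identity $\overline{P_f^0 u}_{\Omega_f} = \bar u_{\Omega_f}$ kills the leading term and leaves only the oscillations $|(P_f^0 u)(f) - \overline{P_f^0 u}_{\Omega_f}|$ and $|\bar u_{\Omega_f} - \bar u_R|$, each estimated by a point-value inverse inequality and a Poincar\'e inequality, hence by $h_T^{1-n/2}\|du\|$. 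The $L^2$ bound $\|e_0\|_{L^2(T)}\le C(\|u\|_{L^2(D_{0,T})} + h_T\|du\|_{L^2(D_{0,T})})$ is obtained analogously, now estimating $|\alpha_f|$ itself through $\bar u_{\Omega_f}$ plus oscillation. Assembling this vertex estimate with the induction above completes the proof.
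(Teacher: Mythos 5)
Your proposal is correct, but it proves the theorem by a genuinely different route than the paper. The paper does not prove Theorem~\ref{bound0} directly at all: it defers it to the general Theorem~\ref{bound} of Section~\ref{bounds}, whose proof is a scaling--compactness argument. There one shows (Lemma~\ref{reference}) that $\pi_h^k$ commutes with the pullback under the affine map $\Phi_T(x)=(x-x_0)/h_T$, invokes the qualitative bound \eqref{local-bound-h} on the rescaled configuration, and then argues that by shape regularity the induced triangulations $\hat\T_h(D_T)$ vary over a compact set, so the constant $C(h,T)$ is uniformly bounded; the factors of $h_T$ in the final estimate come out of the pullback identity \eqref{pullback}. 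Your argument is instead fully quantitative and specific to $k=0$: you establish explicit uniform bounds for each building block of the recursion \eqref{pi-m-0-recursion} --- the energy contraction of $P_f^0$, the Friedrichs inequality on $\breve\P_r\Lambda^0(\T_{f,h})$ (correctly justified by noting that the only constant with trace in $\0\P_r(f)$ is zero), the $L^2$- and inverse-type bounds for $E_f^0$ --- and propagate them through the levels by induction on $m$, with a Cl\'ement-type oscillation estimate handling the vertex level $\pi_0^0$, which is indeed where the mean-preservation of the vertex projection $P_f^0$ is essential. What each approach buys: the paper's argument is short, avoids unwinding the recursion, and applies verbatim to all $k$, but it is non-constructive about the constant and leans on the (standard, but unargued) continuity of $C(h,T)$ over the compact family of reference configurations; your argument is longer and would need to be redone for $k\ge 1$, but it exposes exactly which structural properties are used at each level and yields constants depending only on $r$, $n$, and the shape-regularity constant. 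The one place where you should be slightly more careful in a write-up is the bookkeeping of domains in the induction step: the bound $\|d e_m\|_{L^2(T)}\le C\|d e_{m-1}\|_{L^2(D_{m,T})}$ requires applying the inductive hypothesis on every $T'\in\T_{f,h}$ for $f\in\Delta_m(T)$ and summing with bounded overlap, which is precisely what the recursive definition \eqref{D-recursion} of $D_{m,T}$ is designed to accommodate; as stated your sketch glosses over this union, but it causes no difficulty.
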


In fact, this result is just a special case of Theorem~\ref{bound}
below, so we omit the proof here. Of course, due to the bounded
overlap property of the covering $\{D_T\}_{T \in \T_h}$ of $\Omega$,
derived from the corresponding property of $\{\Omega_f^e\}$,
global estimates follow directly from the local estimates above.

\subsection{Differential forms and finite element spaces}
\label{sec:notation-prelim}
We will basically adopt the notation from \cite{bulletin}.
The spaces $\P_r\Lambda^k(\T_h) \subset H\Lambda^k(\Omega)$
can be characterized as the space of piecewise polynomial $k$ forms
$u$ of degree less than or equal to $r$, such that the trace, 
$\tr_f u$, is continuous
for all $f \in \Delta(\T_h)$, with $\dim f \ge k$,
where we recall that the trace, $tr_f$, of a differential form is
defined by restricting to $f$ and applying the form only to tangent vectors.
The space $\P_r^-\Lambda^k(\T_h) \subset H\Lambda^k(\Omega)$
is defined similarly, but on each element $T \in \T_h$,
$u$ is restricted  to be in $\P_r^-\Lambda^k \subset \P_r\Lambda^k$.
Here, the polynomial class $\P_r^-\Lambda^k$ consists of all
elements $u$ of $\P_r\Lambda^k$ such that $u$ contracted with the
position vector $x$, $u \lrcorner x$, is in $\P_r\Lambda^{k-1}$.
Hence, for each $k$ we have a sequence of nested spaces
\[
\P_1^-\Lambda^k(\T_h) \subset \P_1\Lambda^k(\T_h) 
\subset \P_2^-\Lambda^k(\T_h)\subset \ldots H\Lambda^k(\Omega).
\]
In particular, $\P_r^-\Lambda^0(\T_h) = \P_r\Lambda^0(\T_h)$, and 
$\P_r^-\Lambda^n(\T_h) = \P_{r-1}\Lambda^n(\T_h)$.

Instead of distinguishing the theory for the spaces
$\P_r^-\Lambda^k(\T_h)$ and $\P_r\Lambda^k(\T_h)$ we will use
the simplified notation $\P\Lambda^k(\T_h)$ to denote either a space
of the family
$\P^-_r\Lambda^k(\T_h)$ or $\P_r\Lambda^k(\T_h)$. More precisely, 
we assume that we are given a sequence of spaces  
$\P\Lambda^k(\T_h)$, for $k= 0, 1, \ldots , n$, such that the
corresponding polynomial sequence $(\P\Lambda, d)$, given by
\begin{equation}\label{pol-complex}
\begin{CD}
\R \to  \P\Lambda^0(\R^n) @>d>> \P\Lambda^{1}(\R^n)
 @>d>> \cdots @>d>> \P\Lambda^n(\R^n)\to 0
\end{CD}
\end{equation}
is
an exact complex (cf. Section 5.1.4 of \cite{bulletin}).
In particular, this allows for combinations of spaces taken from the 
two families
$\P_r^-\Lambda^k(\T_h)$ and $\P_r\Lambda^k(\T_h)$.
For any $f \in \Delta(\T)$, with $\dim f \ge k$, the space 
$\P\Lambda^k(f) = \tr_f\P\Lambda^k(\T_h)$, while $\0\P\Lambda^k(f) = 
\{v \in \P \Lambda^k(f)\, |\,  \tr_{\partial f} v =0\}$.
The corresponding polynomial complexes of the form  
$(\P\Lambda(f),d)$ are all exact. Furthermore, the complexes with homogeneous
boundary conditions, $(\0\P\Lambda(f),d)$, given by 
\begin{equation}\label{pol-complex-0} 
\begin{CD}
\0\P\Lambda^0(f) @>d>> \0\P\Lambda^{1}(f)
 @>d>> \cdots @>d>> \0\P\Lambda^{\dim f}(f)\to \R
\end{CD}
\end{equation}
are also exact. 

We recall that the spaces $\P\Lambda^k(\T_h)$ admit degrees of freedom
of the form
\begin{equation}\label{DOF}
\int_f \tr_f u \wedge \eta, \quad \eta
\in \P'(f,k), \quad f \in \Delta(\T_h),
\end{equation}
where $\P'(f,k) \subset \Lambda^{dim f - k}(f)$ is a polynomial space
of differential forms and the symbol $\wedge$ is used to denote 
the exterior product. These degrees of freedom
uniquely determine an element in $\P\Lambda^k(\T_h)$,
(cf. Theorem 5.5 of \cite{bulletin}). In fact, if
\[
\P\Lambda^k(\T_h) = \P_r^-\Lambda^k(\T_h), \quad \text{then }
\P'(f,k) = \P_{r+k-\dim f -1}\Lambda^{\dim f -k}(f),
\]
while if
\[
\P\Lambda^k(\T_h) = \P_r\Lambda^k(\T_h), \quad \text{then }
\P'(f,k) = \P_{r+k-\dim f}^-\Lambda^{\dim f -k}(f).
\]
If $v \in \0\P\Lambda^k(f)$,
then $v$ is uniquely determined by the functionals derived from $\P'(f,k)$.
Furthermore, any $v \in \P\Lambda^k(f)$ is uniquely determined by $\P'(g,k)$
for all $g \in \Delta(f)$.
In particular, if $\dim f < k$
then $\P'(f,k)$ is empty, while $\P'(f,k)$ is always nonempty
if $\dim f = k$. For $\dim f >k$ the set $\P'(f,k)$ can also be empty
if the polynomial degree $r$ is sufficiently low. 

The local spaces $\P\Lambda^k(\T_{f,h})$ and $\P\Lambda^k(\T_{f,h}^e)$ are 
defined by restricting the space $\P\Lambda^k(\T_{h})$ to the
macroelements $\Omega_f$ or $\Omega_f^e$.
It follows from 
the assumption that $\Omega_f$ and $\Omega_f^e$ are contractive, that
all the local complexes $(\P\Lambda(\T_{f,h}),d)$
and $(\P\Lambda(\T_{f,h}^e),d)$ are exact. 
The same holds for the subcomplexes 
$(\0\P\Lambda(\T_{f,h}),d)$ and
$(\0\P\Lambda(\T_{f,h}^e),d)$, corresponding to the subspaces of
functions with zero trace on the
boundary of the macroelements.

For a given triangulation $\T_h$, the spaces of 
lowest order polynomial degree, $\P_1^-\Lambda^k(\T_h)$,
i.e., the space of Whitney forms, 
will play a special role in our construction.
The dimension of this space is equal to the number of elements in
$\Delta_k(\T_h)$, and the properties of these spaces will
in some sense reflect the properties of the triangulation.
Therefore, this space will be used to transfer information between
different macroelements, cf. Section~\ref{sec:whitney} below.
For $k=0$ this space is just $\P_1\Lambda^k(\T_h)$, 
the space of continuous piecewise
linear functions. The natural basis for this space is the set of
generalized barycentric coordinates,
defined to be one at one vertex, and zero at all other vertices.
It follows from the discussion above that the degrees of freedom for
the space $\P_1^-\Lambda^k(\T_h)$, $0 \le k \le n$, are $\int_fu$ for all
$f \in \Delta_k(\T_h)$.
In fact, if $f =[x_0,x_1, \ldots x_k] \in \Delta_k(\T_h)$, we define the
Whitney form associated to $f$, $\phi_f^k \in \P_1^-\Lambda^k(\T_h)$, by
\[
\phi_f^k = \sum_{i=0}^k (-1)^i \lambda_i d \lambda_0 \wedge \cdots
\wedge \widehat{d \lambda_i} \wedge \cdots \wedge d \lambda_k,
\]
where $\lambda_0, \lambda_1, \ldots, \lambda_k$ are the barycentric coordinates
associated to the vertices $x_i$.
The basis function $\phi_f^k$ reduces to a constant $k$ form on $f$, i.e.,
$\tr_f \phi_f^k \in \P_0\Lambda^k(f)$, and it has the property that
$\tr_g \phi_f^k = 0$ for $g \in \Delta_k(\T_h)$, $g\neq f$.
In fact, if $\vol_f \in \P_0\Lambda^k(f)$ is the volume form on $f$, 
scaled such that $\int_f \vol_f  = 1$,
then
\[
\tr_f \phi_f^k = (k!)^{-1}\vol_f,
\]
cf. \cite[Section 4.1]{acta}. Furthermore, the map 
$\vol_f \to \E_f^k \vol_f = k! \phi_f^k$
defines an extension operator 
$\E_f^k : \P_0\Lambda^k(f) \to \0\P_1^-\Lambda^k(\T_{f,h})$
for any $f \in \Delta_k(\T_h)$.
We observe that the operators $\E_f^k$ are natural generalizations of the 
piecewise linear extension operators
$\E_f^0$, introduced above for scalar valued functions.
In fact, any element $u$ of $\P_1^-\Lambda^k(\T_h)$ admits the
representation
\begin{equation}\label{Whitney-rep}
u = \sum_{f \in \Delta_k(\T_h)} \left(\int_f \ \tr_f u\right) \E_f^k\vol_f.
\end{equation}

We finally note that it follows from Stokes' theorem that if
$f = [x_0, x_1, \ldots ,x_{k+1}]$, 
and $u$ is a sufficiently smooth $k$ form on $f$, then
\begin{equation}\label{stokes}
\int_f du = \sum_{j= 0}^{k+1}(-1)^j\int_{f_j} \tr_{f_j}u,
\end{equation}
where $f_j = [x_0, \ldots, x_{j-1}, \hat x_j, x_{j+1}, \cdots x_{k+1}]$.
Here the factor $(-1)^j$ enters as a consequence of 
orientation.

\section{A special projection onto the Whitney forms}
\label{sec:whitney}
Recall that the purpose of this paper is to construct local cochain
projections $\pi_h^k$ which map $H\Lambda^k(\Omega)$ boundedly onto
the piecewise polynomial space $\P\Lambda^k(\T_h)$. Furthermore, in
the construction of $\pi_h^0$ given above, the construction of $\tr_f
\circ \pi_h^0$ is based on a local projection, $P_f^0$, defined with
respect to the associated macroelement $\Omega_f$.  Therefore one
might hope that all the projections $\pi_h^k$ have the property that
$\tr_f \circ \pi_h^k$ is defined from a local projection operator
defined on $\Omega_f$ for $f \in \Delta(\T_h)$, $\dim f \ge k$.
However, a simple computation in two space dimensions, and with
$\P\Lambda^k(\T_h) = \P_1^-\Lambda^k(\T_h)$, will convince the reader
that if $ f = [x_0,x_1] \in \Delta_1(\T_h)$, then
\[
\int_f \tr_f d \pi_h^0 u = \int_{x_0}^{x_1} \frac{d}{ds} \pi_h^0 u \, ds
= (\pi_h^0 u)(x_1) - (\pi_h^0 u)(x_0),
\]
and the right hand side here clearly depends on $u$ restricted to 
the union of the macroelements associated to the 
vertices $x_0$ and $x_1$. 
Therefore, $\int_f \tr_f \pi_1^h du = \int_f \tr_f d \pi_h^0 u$ 
must also depend on $u$ restricted to 
the union of these macroelements, and this domain is exactly equal to the 
extended macroelement $\Omega_f^e$.
This motivates why the extended macroelements, 
$\Omega_f ^e$, for $f \in \Delta_k(\T_h)$, will appear in the
construction below.
In fact, a special projection operator, 
$R_h^k : H \Lambda^k(\Omega) \to \P_1^- \Lambda^k(\T_h) 
\subset \P\Lambda^k(\T_h)$,
will be utilized in the construction of $\pi_h^k$ to make sure that
\[
\int_f \tr_f \pi_h^k du = \int_f \tr_f d\pi_h^{k-1} u  
= \int_{\partial f} \tr_{\partial f} \pi_h^{k-1}u,
\]
for all $f \in \Delta_k(\T_h)$.

The operator $R_h^k$ will commute with the exterior derivative, and it
is a projection onto $\P_1^-\Lambda^k(\T_h)$. Therefore, in the case
of lowest polynomial degree, when $\P\Lambda^k(\T_h) =
\P_1^-\Lambda^k(\T_h)$, we will take $\pi_h^k = R_h^k$.  However,
another key property of the operator $R_h^k$ is that in the general
case, when $\P_1^-\Lambda^k(\T_h)$ is only contained in
$\P\Lambda^k(\T_h)$, we will have
\begin{equation}\label{R-key-prop}
\int_f \tr_f R_h^k u = \int_f \tr_f u, \quad f \in \Delta_k(\T_h), \, u 
\in \P\Lambda^k(\T_h),
\end{equation}
i.e., the operator $R_h^k$ preserves the mean values of the traces of function 
in $\P\Lambda^k(\T_h)$
on subsimplexes $f$ of dimension $k$. The rest of this section is devoted to 
the construction of the operator $R_h^k$, and the derivation of the 
key properties given in Theorem~\ref{whitney-commute} below.

\subsection{Tools for the construction}
Throughout this section the dependence on the mesh parameter $h$ is
suppressed in order to simplify the notation.
To define the  special projection $R^k$ onto the Whitney forms,
$\P_1^-\Lambda^k(\T)$, 
we will use local projections,
$Q_{f}^k$, defined with respect to the extended macroelements
$\Omega_f^{\eps}$.
We define the projection $Q_{f}^k :
H\Lambda^k(\Omega_f^{\eps}) \to \P \Lambda^k(\T_{f}^{\eps})$ by the system
\begin{align*}
\<Q_{f}^k u, d\tau\>_{\Omega_f^{\eps}} &= \<u,d\tau\>_{\Omega_f^{\eps}},
\quad \tau \in
\P \Lambda^{k-1}(\T_{f}^{\eps}),\\
\<d Q_{f}^k u,dv\>_{\Omega_f^{\eps}} &= \<du,dv \>_{\Omega_f^{\eps}},
\quad v \in \P \Lambda^{k}(\T_{f}^\eps).
\end{align*}
For $k=0$, the first equation should be replaced by
a mean value condition, so that $Q_{f}^0 = P_{f}^0$.
This system has a unique solution due to the exactness of the complex
$(\P \Lambda(\T_{f}^{\eps}),d)$. Furthermore, by construction we have
\begin{equation}\label{d-Q-commute}
Q_{f}^k du = dQ_{f}^{k-1}u, \quad 0 < k \le  n.
\end{equation}
We will also find it useful to introduce the 
operator
$Q_{f,-}^{k}: H\Lambda^{k}(\Omega_f^e) \to
\P \Lambda^{k-1}(\T_{f}^e)$ defined by the corresponding reduced  system
\begin{align*}
\<Q_{f,-}^{k} u, d\tau\>_{\Omega_f^{\eps}} &= 0,
\quad \tau \in
\P \Lambda^{k-2}(\T_{f,h}^{\eps}),
\\
\<d Q_{f,-}^{k} u,dv\>_{\Omega_f^{\eps}}
&= \<u,dv \>_{\Omega_f^{\eps}},
\quad v \in \P \Lambda^{k-1}(\T_{f,h}^\eps).
\end{align*}
As a consequence, the projection $Q_f^k$ can be expressed as 
\begin{equation}\label{Q-decomp}
Q_f^k = d Q_{f,-}^k + Q_{f,-}^{k+1}d.
\end{equation}
To make this relation true also in the case when $k=0$ and $f \in
\Delta_0(\T)$,
the operator $dQ_{f,-}^0$ should have the interpretation that 
$dQ_{f,-}^0u$ is the constant $\int_{\Omega_f}u \wedge
\vol_{\Omega_f}$ on $\Omega_f$, where $\vol_{\Omega_f}$ is the volume
form on $\Omega$, restricted to $\Omega_f$ and scaled such that $\int_{\Omega_f}\vol_{\Omega_f} =1$.

To motivate the rest of the tools we need to for our construction,
consider again $d\pi^0u$ in the special case when $\P\Lambda^k(\T) =
\P_1^-\Lambda^k(\T)$.
To obtain a commuting relation of the form $d\pi^0u =
\pi^1du$, we have to be able to express $d\pi^0
u$
in terms of $du$. However, using the notation just introduced, we have 
\[
d\pi^0 u = \sum_{g \in \Delta_0(\T)}\Big[\Big(\int_{\Omega_g}u \wedge
\vol_{\Omega_g}\Big) + \tr_g(Q_{g,-}^1du)\Big]d\E_g\vol_g.
\]
The second part of this sum is already expressed in terms of $du$.
By combining the contributions from neighbouring macroelements 
we wil see that also the first part of the right hand side can
be expressed in term of $du$.
If $f = [x_0,x_1] \in
\Delta_1(\T)$,
we have
\[
\int_f \tr_f \sum_{g \in \Delta_0(\T)}\Big(\int_{\Omega_g}u \wedge
\vol_{\Omega_g}\Big)d\E_g\vol_g = \int_{\Omega_f^e}u \wedge
(\vol_{\Omega_{g_1}} - \vol_{\Omega_{g_0}}),
\]
where $g_i = [x_i]$.  Furthermore, $\vol_{\Omega_{g_1}} -
\vol_{\Omega_{g_0}} \in \P_0\Lambda^n(\T_f^e)=
\P_1^-\Lambda^n(\T_f^e)$, and with vanishing
integral.
As a consequence, there exists $z_f^1 \in
\0\P_1^-\Lambda^{n-1}(\T_f^e)$ such that $dz_f^1 = \vol_{\Omega_{g_0}} -
\vol_{\Omega_{g_1}}$, and by integration by parts 
\[
\int_f \tr_f \sum_{g \in \Delta_0(\T)}\Big(\int_{\Omega_g}u \wedge
\vol_{\Omega_g}\Big)d\E_g\vol_g = -\int_{\Omega_f^e}u
\wedge dz_f^1 = \int_{\Omega_f^e}du
\wedge z_f^1.
\]
By utilizing the representation \eqref{Whitney-rep}, we therefore
obtain 
\[
\sum_{g \in \Delta_0(\T)}\Big(\int_{\Omega_g}u \wedge
\vol_{\Omega_g}\Big)d\E_g\vol_g = \sum_{f \in \Delta_1(\T_h)} \Big(\int_{\Omega_f^e}du
\wedge z_f^1   \Big) \E_f^k\vol_f.
\]
This discussion shows that to construct 
local cochain  projections, we must utilize relations between local
operators defined on different macroelements.
To derive the proper relations, we introduce an operator 
\[
\delta : \bigoplus_{g \in \Delta_m(\T)}\0\P_1^-\Lambda^k(\T_g^e) \to 
\bigoplus_{f \in \Delta_{m+1}(\T)}\0\P_1^-\Lambda^k(\T_f^e).
\]
If $f = [x_0, \ldots, x_{m+1}] \in \Delta_{m+1}(\T)$, then the 
component $(\delta u)_f$ of $\delta u$ is defined by
\begin{equation*}
(\delta u)_f 
= \sum_{j=0}^{m+1} (-1)^j u_{f_j},
\end{equation*}
where, as above, 
$f_j = [x_0, \ldots, x_{j-1}, \hat x_j, x_{j+1}, \cdots x_{m+1}]$, and
$u_{f_j}$
the corresponding component of $u$. 
We will also consider the exterior derivative $d$ as an operator
mapping $\bigoplus_{g \in \Delta_m(\T)}\0\P_1^-\Lambda^k(\T_g^e)$ to 
$\bigoplus_{g \in \Delta_m(\T)}\0\P_1^-\Lambda^{k+1}(\T_g^e)$ by
applying it to each component. Hence, the two operators 
$d \circ \delta$ and $\delta \circ d$
both map $\bigoplus_{g \in \Delta_m(\T)}\0\P_1^-\Lambda^k(\T_g^e)$ into
$\bigoplus_{f \in \Delta_{m+1}(\T)}\0\P_1^-\Lambda^{k+1}(\T_f^e)$.
In fact, we have the stucture of a double complex which resembles 
the well--known \v{C}ech--de Rham complex, cf. \cite{Bott-Tu}.
The following two properties of the operator
$\delta$ are crucial.

\begin{lem}
\label{delta-prop}
\begin{equation*}
d\circ \delta  = \delta \circ d , \quad \text{and} \quad
\delta \circ \delta  =0.
\end{equation*}
\end{lem}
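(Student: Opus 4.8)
The plan is to treat both identities as essentially formal, keeping the combinatorial (simplicial) structure cleanly separated from the form-theoretic one. The only point that requires genuine care is the bookkeeping of domains hidden in the definition of $\delta$. Indeed, for $f = [x_0,\ldots,x_{m+1}]$ each face $f_j \in \Delta(f)$, so the nesting recalled in the preliminaries gives $\Omega_{f_j}^e \subset \Omega_f^e$; hence in $(\delta u)_f = \sum_j (-1)^j u_{f_j}$ the summands $u_{f_j} \in \0\P_1^-\Lambda^k(\T_{f_j}^e)$ must first be extended by zero to the larger domain $\Omega_f^e$. Writing $E$ for this zero-extension, the vanishing trace of $u_{f_j}$ on $\partial\Omega_{f_j}^e$ guarantees that $Eu_{f_j}\in\0\P_1^-\Lambda^k(\T_f^e)$ and, crucially, that $d(Ew) = E(dw)$: there is no distributional jump across $\partial\Omega_{f_j}^e$ precisely because the tangential trace vanishes there. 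This is the one place where the homogeneous boundary condition is actually used, and I would record it as a preliminary observation before starting either computation.

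For $d\circ\delta = \delta\circ d$ I would then simply compute componentwise. Fixing $f \in \Delta_{m+1}(\T)$ and using that $d$ is linear, acts componentwise (so $(du)_g = d(u_g)$), and commutes with $E$ by the observation above, one obtains
\[
(d\delta u)_f = d\sum_{j=0}^{m+1}(-1)^j E u_{f_j} = \sum_{j=0}^{m+1}(-1)^j E\,d(u_{f_j}) = \sum_{j=0}^{m+1}(-1)^j E\,(du)_{f_j} = (\delta\,du)_f .
\]
Since this holds on every component, the two operators agree, giving the first identity.

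For $\delta\circ\delta = 0$ I would carry out the classical double-deletion sign cancellation (the zero-extensions compose associatively and can be suppressed, since the cancellation is term-by-term among identical objects). Fix $f = [x_0,\ldots,x_{m+2}] \in \Delta_{m+2}(\T)$ and expand
\[
(\delta\delta u)_f = \sum_{i=0}^{m+2}(-1)^i (\delta u)_{f_i} = \sum_{i=0}^{m+2}\sum_{k=0}^{m+1} (-1)^{i+k}\, u_{(f_i)_k},
\]
where $(f_i)_k$ is the subsimplex of $f$ obtained by deleting two vertices. After removing $x_i$ the vertices of $f_i$ are reindexed, so $(f_i)_k$ deletes $x_i$ together with $x_k$ when $k<i$ and together with $x_{k+1}$ when $k\ge i$. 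Collecting terms, for each pair $a<b$ the subsimplex deleting $x_a,x_b$ arises exactly twice: once from the range $k<i$ (with $k=a$, $i=b$) carrying sign $(-1)^{a+b}$, and once from the range $k\ge i$ (with $i=a$, $k=b-1$) carrying sign $(-1)^{a+b-1}$. These cancel, so $(\delta\delta u)_f = 0$ for every $f$, which is the second identity.

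The computations themselves are routine; the genuine obstacle is confirming that $d$ commutes with the implicit zero-extension across the nested macroelement boundaries, i.e.\ that $\delta$ respects the homogeneous-trace spaces $\0\P_1^-\Lambda^k(\T_f^e)$. Once that compatibility is established from the vanishing-trace characterization, both identities reduce to the formal manipulations above, and together they express the \v{C}ech--de Rham double-complex structure anticipated in the construction.
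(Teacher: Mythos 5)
Your proof is correct and follows essentially the same route as the paper: componentwise commutation of $d$ with the alternating sum for the first identity, and the classical double-deletion sign cancellation for the second. The one addition — making explicit that the components $u_{f_j}$ must be zero-extended from $\Omega_{f_j}^e$ to $\Omega_f^e$ and that this extension commutes with $d$ thanks to the vanishing trace — is a point the paper leaves implicit, and is a worthwhile clarification rather than a change of approach.
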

\begin{proof}
It follows directly from the definition of $\delta$ that for 
$f = [x_0, \ldots, x_{m+1}] \in \Delta_{m+1}(\T)$
\begin{equation*}
(d\circ \delta u)_f = (\delta \circ d u)_f = 
\sum_{j=0}^{m+1} (-1)^j du_{f_j}. 
\end{equation*}
If we further denote by $f_{ij}$ the subsimplex
of $f$ obtained by deleting both $x_i$ and $x_j$, then 
\begin{align*}
(\delta \circ \delta u)_f &= 
 \sum_{j=0}^{m+1} (-1)^j (\delta u)_{f_j}\\
&=\sum_{j=0}^{m+1} (-1)^j
\Big[\sum_{i=0}^{j-1} (-1)^i u_{f_{ij}}
- \sum_{i=j+1}^{m+1} (-1)^i u_{f_{ij}} \Big] =0,
\end{align*}
since for each $i,j = 0, \ldots m+1$, with $i \neq j$, the term
$u_{f_{ij}}$ 
appears exactly twice with opposite signs.
\end{proof}

The construction of the projection $R^k = R_h^k$ will depend on local
weight functions, $z_f^k \in \0\P_1^-\Lambda^{n-k}(\T_f^e)$ for $f \in
\Delta_k(\T)$.
In particular, the function $z_f^0 \in \P_0\Lambda^n(\T_f)$ for $f
\in \Delta_0(\T)$ will be given by $z_f^0 =
\vol_{\Omega_f}$.
For $k= 1, 2, \ldots ,n$, the functions $z_f^k \in
\0\P_1^-\Lambda^{n-k}(\T_f^e)$ 
are defined recursively to satisfy the conditions
\begin{equation}
\label{induction-hyp2}
d z_f^k = (-1)^k (\delta z^{k-1})_f,
\end{equation}
and 
\begin{equation}
\label{addconds}
\< z_f^k, d \tau \>_{\Omega_f^e} = 0, \qquad \tau \in \0 \P_1^-
\Lambda^{n-k-1}(\T_f^e),
\end{equation}
for any $f \in \Delta_k(\T)$.
We will not give an explicit construction of the functions $z_f^k$.
However, we have the following basic result.

\begin{lem}
\label{well-defined}
The weight functions $z_f^k \in \0\P_1^-\Lambda^{n-k}(\T_f^e)$
exist and are uniquely determined by $z_f^0$
and the conditions \eqref{induction-hyp2} and  \eqref{addconds}.
\end{lem}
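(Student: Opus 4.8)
The plan is to prove the lemma by induction on $k$, establishing existence and uniqueness of the whole family $\{z_f^k\}_{f \in \Delta_k(\T)}$ simultaneously at each level. Carrying the two together is essential, since the right-hand side $(-1)^k(\delta z^{k-1})_f$ of \eqref{induction-hyp2} becomes an unambiguous datum only once the previous family $z^{k-1}$ has been pinned down uniquely. The base case $k=0$ is immediate, as $z_f^0 = \vol_{\Omega_f}$ is prescribed. So I would assume that $z^{k-1} = \{z_g^{k-1}\}_{g \in \Delta_{k-1}(\T)}$ exists and is uniquely determined, which makes $(\delta z^{k-1})_f \in \0\P_1^-\Lambda^{n-k+1}(\T_f^e)$ well defined for each $f \in \Delta_k(\T)$; note that $\Omega_{f_j}^e \subset \Omega_f^e$, so each summand extends by zero into $\T_f^e$.

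The crux is solvability: I must show that $(\delta z^{k-1})_f$ lies in the range of $d$ on $\0\P_1^-\Lambda^{n-k}(\T_f^e)$. Here I would exploit the exactness of the homogeneous complex $(\0\P_1^-\Lambda(\T_f^e),d)$, valid because $\Omega_f^e$ is contractive; recall this means exactness at each intermediate degree, while at the top degree $n$ the range of $d$ consists exactly of the $n$-forms with vanishing integral over $\Omega_f^e$. For $k \ge 2$ the target degree $n-k+1$ is intermediate, so it suffices to verify that $(\delta z^{k-1})_f$ is closed. This follows from Lemma~\ref{delta-prop}: $d(\delta z^{k-1})_f = (\delta\, d z^{k-1})_f$, and the induction hypothesis $d z^{k-1} = (-1)^{k-1}\delta z^{k-2}$ combined with $\delta\circ\delta = 0$ forces the right-hand side to vanish. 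For $k=1$ the target degree is $n$, so closedness is automatic and the genuine condition is $\int_{\Omega_f^e}(\delta z^0)_f = 0$; writing $f=[x_0,x_1]$, this reduces to $\int_{\Omega_{f_0}}\vol_{\Omega_{f_0}} - \int_{\Omega_{f_1}}\vol_{\Omega_{f_1}} = 1 - 1 = 0$, using the unit scaling of the base volume forms.

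Once solvability is secured there is some $w_f \in \0\P_1^-\Lambda^{n-k}(\T_f^e)$ with $d w_f = (-1)^k(\delta z^{k-1})_f$, but $w_f$ is determined only modulo closed forms. To enforce \eqref{addconds} I would split $w_f$ orthogonally in $L^2(\Omega_f^e)$ as $w_f = z_f^k + d\sigma$ with $z_f^k$ orthogonal to $d\,\0\P_1^-\Lambda^{n-k-1}(\T_f^e)$; then $d z_f^k = d w_f$ preserves \eqref{induction-hyp2} while \eqref{addconds} holds by construction, giving existence. For uniqueness, the difference $w$ of two solutions is closed and satisfies $\langle w, d\tau\rangle_{\Omega_f^e}=0$ for all admissible $\tau$. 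When $1 \le k < n$, exactness at the intermediate degree $n-k$ gives $w = d\sigma$, whence $\langle w,w\rangle_{\Omega_f^e} = \langle w, d\sigma\rangle_{\Omega_f^e} = 0$; when $k=n$, $w$ is a closed $0$-form vanishing on $\partial\Omega_f^e$, hence $w=0$. This closes the induction.

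I expect the solvability step to be the main obstacle, and in particular the interplay at the two ends of the induction: the double-complex identities of Lemma~\ref{delta-prop} do the work for $k\ge2$, but the initial level $k=1$ genuinely requires the mean-value computation, since closedness there is vacuous and one must instead invoke the top-degree exactness of the homogeneous complex together with the unit normalization of $\vol_{\Omega_g}$. By comparison, the orthogonal-projection argument used to select $z_f^k$ and to prove uniqueness is routine.
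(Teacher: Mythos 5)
Your proposal is correct and follows essentially the same route as the paper: induction on $k$, the mean-value computation $\int_{\Omega_f^e}(\delta z^0)_f=0$ at the base level $k=1$, the identity $d\circ\delta=\delta\circ d$ together with $\delta\circ\delta=0$ from Lemma~\ref{delta-prop} to get closedness for $k\ge 2$, and the exactness of $(\0\P_1^-\Lambda(\T_f^e),d)$ for solvability. The only difference is that you spell out the orthogonal splitting $w_f=z_f^k+d\sigma$ enforcing \eqref{addconds} and the resulting uniqueness argument, which the paper leaves as a one-line remark.
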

\begin{proof}
We establish the existence of the functions $z_f^k$
by induction on $k$.
Let $f = [x_0,x_1] \in \Delta_1(\T)$. Then
\[
(\delta z^0)_f = (z_{f_1}^0 - z_{f_0}^0)
= \vol_{\Omega_{f_1}} - \vol_{\Omega_{f_0}},
\]
which implies that
$\int_{\Omega_f^e} (\delta z^0)_f = 0$.
Hence, by the exactness of the complex
\begin{equation*}
\0\P_1^-\Lambda^{n-1}(\T_f^e) \xrightarrow {d} \P_0\Lambda^n(\T_f^e) \to \R,
\end{equation*}
there exists $z^1_f \in \0\P_1^-\Lambda^{n-1}(\T_f^e)$ satisfying
$d z^1_f = - (\delta z^0)_f$.
Next, assume we have constructed $z^{k-1} \in 
\bigoplus_{f \in \Delta_{k-1}(\T)}\0\P_1^-\Lambda^{n-k+1}(\T_f^e)$ such that
$d z_f^{k-1} = (-1)^{k-1}(\delta z^{k-2})_f$ for all $f \in
\Delta_{k-1}(\T)$. From 
Lemma~\ref{delta-prop}
we obtain 
\begin{equation*}
(d \circ \delta) z^{k-1} = (\delta \circ d) z^{k-1}
= (-1)^{k-1}(\delta \circ \delta) z^{k-2} = 0,
\end{equation*}
and for each $f \in \Delta_k(\T)$ the complex $(d,
\0\P_1^-\Lambda(\T_f^e))$ is exact. Therefore, we can conclude that 
there is a $z_f^k\in
\0\P_1^-\Lambda^{n-k}(\Delta_k(\T_f^e))$ such that 
\eqref{induction-hyp2} holds. This completes the induction argument.
Finally, we observe that it is a consequence of \eqref{addconds}
and the exactness of
the complex 
$(d,\0\P_1^-\Lambda(\T_f^e))$ that each function $z_f^k$ is uniquely determined.
\end{proof}
We will use the functions $z_f^k$ to define an operator $M^k : = M_h^k : H\Lambda^k(\Omega) \to \P_1^-
\Lambda^k(\T)$ by 
\[
M^k u = \sum_{f \in \Delta_{k}(\T)} \Big(\int_{\Omega_f^e} u
\wedge z_f^{k}\Big) \E_f^{k}\vol_f.
\]
Note that $M^k u$ is a generalization for $k$-forms of the expression
\[
 \sum_{f \in \Delta_{0}(\T)} \Big(\int_{\Omega_f} u
\wedge \vol_{\Omega_f}\Big) \E_f \vol_f.
\]
appearing above in the case of zero-forms.
It follows from the construction of the functions $z_f^k$
that the operator $M^k$ commutes with the exterior derivative.

\begin{lem}
\label{zfk-ident}
For any $v \in H\Lambda^{k-1}(\Omega)$ the identity $dM^{k-1}v = M^kdv$
holds.
\end{lem}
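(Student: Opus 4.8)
The plan is to exploit the fact that both $dM^{k-1}v$ and $M^kdv$ lie in the Whitney space $\P_1^-\Lambda^k(\T)$: the operator $M^{k-1}$ maps into $\P_1^-\Lambda^{k-1}(\T)$ and $d$ preserves the Whitney subcomplex, while $M^k$ maps into $\P_1^-\Lambda^k(\T)$ by definition. By the representation \eqref{Whitney-rep}, an element of $\P_1^-\Lambda^k(\T)$ is completely determined by the numbers $\int_f\tr_f(\cdot)$ as $f$ ranges over $\Delta_k(\T)$. Hence it suffices to verify, for each fixed $f\in\Delta_k(\T)$, the scalar identity $\int_f\tr_f dM^{k-1}v = \int_f\tr_f M^kdv$.

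The right-hand side is immediate from the definition of $M^k$. Using that $\E_g^{k}\vol_g = k!\,\phi_g^k$ and that the Whitney forms satisfy the duality $\int_{f}\tr_{f}\E_g^{k}\vol_g = \delta_{fg}$ for $f,g\in\Delta_k(\T)$, one reads off $\int_f\tr_f M^kdv = \int_{\Omega_f^e}dv\wedge z_f^k$. For the left-hand side I would first apply Stokes' theorem \eqref{stokes} on $f=[x_0,\dots,x_k]$ to move $d$ onto the boundary, giving $\int_f\tr_f dM^{k-1}v = \sum_{j=0}^{k}(-1)^j\int_{f_j}\tr_{f_j}M^{k-1}v$. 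Since each $f_j\in\Delta_{k-1}(\T)$, the same duality of the Whitney basis with the functionals $\int_g\tr_g$ collapses each boundary integral to $\int_{f_j}\tr_{f_j}M^{k-1}v = \int_{\Omega_{f_j}^e}v\wedge z_{f_j}^{k-1}$.

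The bridge between the two expressions is where the structure of the weights $z_f^k$ enters. Each $z_{f_j}^{k-1}$ has vanishing trace on $\partial\Omega_{f_j}^e$, and $\Omega_{f_j}^e\subset\Omega_f^e$ because $f_j\in\Delta(f)$; extending by zero therefore lets me regard all the integrals over the common domain $\Omega_f^e$, so that $\sum_j(-1)^j\int_{\Omega_{f_j}^e}v\wedge z_{f_j}^{k-1} = \int_{\Omega_f^e}v\wedge(\delta z^{k-1})_f$ by the very definition of $\delta$. Now I invoke the defining recursion \eqref{induction-hyp2}, $(\delta z^{k-1})_f = (-1)^k dz_f^k$, and integrate by parts on $\Omega_f^e$; the boundary term drops because $z_f^k\in\0\P_1^-\Lambda^{n-k}(\T_f^e)$ has zero trace, leaving $\int_{\Omega_f^e}v\wedge dz_f^k = (-1)^k\int_{\Omega_f^e}dv\wedge z_f^k$. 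Chaining these gives exactly $\int_{\Omega_f^e}dv\wedge z_f^k$, matching the right-hand side.

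I expect the only genuinely delicate point to be the sign bookkeeping: the factors $(-1)^j$ coming from Stokes must reassemble into the coboundary $\delta$, and the two factors of $(-1)^k$ — one from the recursion \eqref{induction-hyp2} and one from the Leibniz rule in the integration-by-parts step for the $(k-1)$-form $v$ against the $(n-k)$-form $z_f^k$ — must cancel. Everything else (the vanishing of boundary traces, the containment $\Omega_{f_j}^e\subset\Omega_f^e$ of extended macroelements, and the duality of the Whitney basis) is routine given the results already established.
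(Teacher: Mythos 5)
Your proposal is correct and follows essentially the same route as the paper: both sides are identified through the Whitney degrees of freedom $\int_f\tr_f(\cdot)$, the left-hand side is reduced via Stokes' theorem \eqref{stokes} to the alternating sum over the faces $f_j$, which reassembles into $\int_{\Omega_f^e}v\wedge(\delta z^{k-1})_f$, and the recursion \eqref{induction-hyp2} together with integration by parts (using $\tr_{\partial\Omega_f^e}z_f^k=0$) closes the argument. The sign bookkeeping you flag as the delicate point does work out exactly as you describe, with the two factors of $(-1)^k$ cancelling.
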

\begin{proof}
We have to show that 
\begin{equation*}
\sum_{g \in \Delta_{k-1}(\T)}  \Big(\int_{\Omega_g^e}
v \wedge z_g^{k-1}\Big)d\E_g^{k-1} \vol_g
=  \sum_{f \in \Delta_{k}(\T)} \Big(\int_{\Omega_f^e}
dv \wedge z_f^{k}\Big)\E_f^{k} \vol_f
\end{equation*}
for any $v \in H\Lambda^{k-1}(\Omega)$.
Since both sides of this equation are elements of $\P_1^- \Lambda^k(\T)$, we
need only check that the integrals of
their traces 
are the same over each $f = [x_0.x_1, \ldots ,x_k] \in \Delta_k(\T)$.
Now it follows from the properties of the extension operators $\E_f^k$
that the integral of the right hand side is simply
$\int_{\Omega_f^e} dv \wedge z_f^k$, while \eqref{stokes} implies that
the corresponding
integral of the left hand side is
\begin{equation*}
\sum_{j=0}^k(-1)^j\int_{\Omega_{f_j}^e}v \wedge z_{f_j}^{k-1}
= \int_{\Omega_{f}^e}v \wedge (\delta z^{k-1})_f
= (-1)^k \int_{\Omega_{f}^e}v \wedge dz_f^{k},
\end{equation*}
where the last identity follows by \eqref{induction-hyp2}.
However, by integration by parts (cf. \cite[Section 2.2]{acta}), 
utilizing that $\tr_{\partial   \Omega_f^e}z_f^k = 0$, we have 
\[
\int_{\Omega_{f}^e}v \wedge dz_f^{k} = (-1)^k\int_{\Omega_{f}^e}dv
\wedge z_f^{k},
\]
and this completes the proof.
\end{proof}
We now define an operator $S^k = S_h^k : H\Lambda^k(\Omega) \to \P_1^-
\Lambda^k(\T)$ recursively by $S^0 = M^0$ and 
\[
S^{k}u = M^ku
+ \sum_{g \in \Delta_{k-1}(\T)}\Big(\int_g \tr_g [I - S^{k-1}]
Q_{g,-}^{k} u \Big) d \E_g^{k-1}\vol_g, \quad  1 \le k \le n.
\]
We recall that the operator $Q_{g,-}^{k}$ is a local operator with
range $\P\Lambda(\T_f^e)$.
However, by an inductive argument, it follows that the composition 
$\tr_f \circ S^k$
is a local operator mapping $H\Lambda^k(\Omega_f^e)$ into $\P_0\Lambda^k(f)$.
Therefore, the operators $S^k$ are indeed well defined.

A key property of the operator $S^k$ is the following result.
\begin{lem}\label{S-prop-mean}
For any $v \in \P\Lambda^{k-1}(\T)$ the identity 
\[
\int_f S^k dv = \int_f dv, \quad f \in \Delta_k(\T).
\]
holds.
\end{lem}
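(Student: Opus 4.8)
The plan is to argue by induction on $k$, reducing the statement for $S^k$ to the corresponding statement for $S^{k-1}$. Since both $\int_f S^k dv$ and $\int_f dv$ are determined, through the representation \eqref{Whitney-rep}, by the integrals $\int_f\tr_f(\cdot)$ over $f\in\Delta_k(\T)$, it suffices to verify the identity of these integrals face by face. First I would insert the definition of $S^k$ and use Lemma~\ref{zfk-ident} to rewrite the leading term as $\int_f M^k dv=\int_f dM^{k-1}v$, which by Stokes' theorem \eqref{stokes} equals $\sum_{j}(-1)^j\int_{f_j}\tr_{f_j}M^{k-1}v$, where $f=[x_0,\ldots,x_k]$ and the $f_j$ are its facets.

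The next step is to collapse the sum over $g\in\Delta_{k-1}(\T)$ coming from the second term of $S^k dv$. The key computational input is that, by Stokes and the defining property that $\tr_{g'}\E_g^{k-1}\vol_g=\vol_g$ for $g'=g$ and $\tr_{g'}\E_g^{k-1}\vol_g=0$ for $g'\neq g$, one obtains $\int_f\tr_f d\E_g^{k-1}\vol_g=(-1)^j$ when $g=f_j$ and $0$ otherwise; this isolates the facets of $f$ and turns the second term into $\sum_j(-1)^j\int_{f_j}\tr_{f_j}[I-S^{k-1}]Q_{f_j,-}^{k}dv$. To handle $Q_{g,-}^k dv$ I would use the decomposition \eqref{Q-decomp}, namely $Q_g^{k-1}=dQ_{g,-}^{k-1}+Q_{g,-}^k d$, together with the fact that $Q_g^{k-1}$ is a projection and therefore reproduces $v$ on $\Omega_g^e$ (since $v\in\P\Lambda^{k-1}(\T)$ restricts to an element of $\P\Lambda^{k-1}(\T_g^e)$); this gives $Q_{g,-}^k dv=v-dQ_{g,-}^{k-1}v$ on $\Omega_g^e$.

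With this substitution the facet contribution becomes $\int_g\tr_g[I-S^{k-1}](v-dQ_{g,-}^{k-1}v)$. Here I would invoke the inductive hypothesis applied to $w=Q_{g,-}^{k-1}v\in\P\Lambda^{k-2}(\T_g^e)$ to conclude $\int_g\tr_g S^{k-1}dQ_{g,-}^{k-1}v=\int_g\tr_g dQ_{g,-}^{k-1}v$; this is legitimate because $\tr_g\circ S^{k-1}$ is a local operator on $\Omega_g^e$, so only the local data of $dQ_{g,-}^{k-1}v$ enter. The $dQ_{g,-}^{k-1}v$ terms then cancel, leaving exactly $\int_g\tr_g(I-S^{k-1})v$ per facet. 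Summing over $j$, the $I$-part reproduces $\sum_j(-1)^j\int_{f_j}\tr_{f_j}v=\int_f dv$, and combining the remaining $-S^{k-1}v$ contribution with the leading $M^{k-1}v$ term leaves the single discrepancy $\sum_j(-1)^j\int_{f_j}\tr_{f_j}(M^{k-1}-S^{k-1})v$.

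The final, and conceptually decisive, step is to show this discrepancy vanishes. By the definition of $S^{k-1}$, the form $(M^{k-1}-S^{k-1})v$ is a linear combination of the exact forms $d\E_h^{k-2}\vol_h$, hence is itself closed; Stokes' theorem \eqref{stokes} then gives $\sum_j(-1)^j\int_{f_j}\tr_{f_j}(M^{k-1}-S^{k-1})v=\int_f\tr_f d[(M^{k-1}-S^{k-1})v]=0$. I expect the main obstacle to be the careful bookkeeping of orientation signs in the incidence identity for $\int_f\tr_f d\E_g^{k-1}\vol_g$ and the clean application of the inductive hypothesis through the locality of $\tr_g\circ S^{k-1}$; the base case $k=1$ needs separate attention, but there $S^0=M^0$ reproduces constants and the correction term is absent, so it follows directly. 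The genuinely clarifying observation is that passing from $M$ to $S$ only adds exact forms, which are invisible to the alternating boundary-integral sum.
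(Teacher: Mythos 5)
Your proof is correct and follows essentially the same route as the paper's: induction on $k$, the commutation $M^k d = dM^{k-1}$ from Lemma~\ref{zfk-ident}, the splitting $Q_{g,-}^{k} dv = Q_g^{k-1}v - dQ_{g,-}^{k-1}v$ with the projection property of $Q_g^{k-1}$ and the induction hypothesis killing the $dQ_{g,-}^{k-1}v$ contribution, and Stokes' theorem \eqref{stokes} at the end. The only cosmetic difference is bookkeeping: you integrate face by face and dispose of the residual $\sum_j(-1)^j\int_{f_j}\tr_{f_j}(M^{k-1}-S^{k-1})v$ by observing that $S^{k-1}v-M^{k-1}v$ is exact, hence closed, whereas the paper cancels the same quantity against $dS^{k-1}v$ via the Whitney representation \eqref{Whitney-rep}, obtaining the intermediate identity \eqref{commutator} as a reusable byproduct.
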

\begin{proof}
The proof goes by induction on $k$. For $k=0$ the space 
$d\P\Lambda^{k-1}(\T)$ should be interpreted as the space of constants
on $\Omega$, and since $S^0 = M^0$ reproduces constants
the desired identity holds.

Assume next that $k \ge 1$ and that the desired identity 
holds for $k-1$. By utilizing the result of Lemma~\ref{zfk-ident}
we obtain that the commutator $S^kd -dS^{k-1}$ has the representation
\[
S^kdv - dS^{k-1}v = \sum_{g \in \Delta_{k-1}(\T)}\Big(\int_g \tr_g [I - S^{k-1}]
Q_{g,-}^{k}d v \Big) d \E_g^{k-1}\vol_g, \quad v \in H\Lambda^{k-1}(\Omega).
\]
We recall that 
$Q_{g,-}^{k}d v = Q_g^{k-1} v - dQ_{g,-}^{k-1}v$,
and by the induction hypothesis
$\int_g \tr_g (I - S^{k-1})dQ_{g,-}^{k-1}v = 0$.
Therefore, for any $v \in H\Lambda^{k-1}(\Omega)$ the 
commutator above can be expressed as 
\begin{equation}\label{commutator}
S^kdv - dS^{k-1}v = \sum_{g \in \Delta_{k-1}(\T)}\Big(\int_g \tr_g [I - S^{k-1}]
Q_{g}^{k-1} v \Big) d \E_g^{k-1}\vol_g.
\end{equation}
However, since $Q_g^{k-1}$ is a projection onto $\P\Lambda^{k-1}(\T_g^e)$,
it follows from 
\eqref{Whitney-rep} that
\[
dS^{k-1}v =  \sum_{g \in \Delta_{k-1}(\T)}\Big(\int_g \tr_g S^{k-1}
Q_{g}^{k-1} v \Big) d \E_g^{k-1}\vol_g, \quad v \in
\P\Lambda^{k-1}(\T).
\]
By restricting to a function $v \in \P\Lambda^{k-1}(\T)$
equation  \eqref{commutator} therefore reduces to
\[
S^k dv = \sum_{g \in \Delta_{k-1}(\T)}\Big(\int_g \tr_g 
v \Big) d \E_g^{k-1}\vol_g.
\]
By integrating this representation over any 
$f = [x_0,x_1, \ldots ,x_k] \in \Delta_k(\T)$ we obtain
\[
\int_f S^k dv = \sum_{j=0}^k (-1)^j\int_{f_j} \tr_{f_j}v  = \int_f dv,
\]
where the final identity follows from \eqref{stokes}
This completes the induction argument.
\end{proof}

As a  direct consequence of the proof above, we have.

\begin{lem}\label{S-prop-com}
The identity \eqref{commutator} holds for $k= 1, 2, \ldots ,n$
and all $v \in H\Lambda^{k-1}(\Omega)$.
\end{lem}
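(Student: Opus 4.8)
The plan is to recognize that the desired identity \eqref{commutator} is not something new to be proved from scratch: it was already derived, as a genuine identity, in the course of proving Lemma~\ref{S-prop-mean}. The only thing that must be checked is that this derivation is valid for an arbitrary $v \in H\Lambda^{k-1}(\Omega)$, rather than merely for the piecewise polynomial $v \in \P\Lambda^{k-1}(\T)$ to which the remainder of that proof specializes. So the task reduces to replaying the derivation of \eqref{commutator} and confirming that each step is an identity on all of $H\Lambda^{k-1}(\Omega)$.

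Concretely, I would proceed as follows. Evaluating the recursive definition of $S^k$ at $dv$ writes $S^k dv$ as $M^k dv$ plus the sum over $g \in \Delta_{k-1}(\T)$ of the terms $\big(\int_g \tr_g[I-S^{k-1}]Q_{g,-}^k dv\big)\,d\E_g^{k-1}\vol_g$. Lemma~\ref{zfk-ident} gives $M^k dv = dM^{k-1}v$, and applying $d$ to the recursion for $S^{k-1}v$ together with $d\circ d = 0$ gives $dS^{k-1}v = dM^{k-1}v$; subtracting these two identities cancels the $M^k dv$ term and identifies the commutator $S^k dv - dS^{k-1}v$ with exactly that sum. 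I would then substitute the rearrangement $Q_{g,-}^k dv = Q_g^{k-1}v - dQ_{g,-}^{k-1}v$ of the decomposition \eqref{Q-decomp}, and finally use Lemma~\ref{S-prop-mean} to discard the $dQ_{g,-}^{k-1}v$ contribution. What remains is the sum with $Q_g^{k-1}v$ in place of $Q_{g,-}^k dv$, which is precisely \eqref{commutator}.

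The single step that requires care, and which is really the only obstacle, is the last one, where Lemma~\ref{S-prop-mean} is invoked to conclude that $\int_g \tr_g[I-S^{k-1}]\,dQ_{g,-}^{k-1}v = 0$; one must confirm that this does not covertly demand that $v$ be piecewise polynomial. It does not. That vanishing is simply Lemma~\ref{S-prop-mean} at level $k-1$ applied to the \emph{argument} $w = Q_{g,-}^{k-1}v$, and for every $v \in H\Lambda^{k-1}(\Omega)$ the operator $Q_{g,-}^{k-1}$ already maps into $\P\Lambda^{k-2}(\T_g^e)$; thus $dQ_{g,-}^{k-1}v = dw$ with $w$ in the piecewise polynomial space (and the relevant mean value is local to $\Omega_g^e$), so Lemma~\ref{S-prop-mean} applies and no smoothness of $v$ beyond $v \in H\Lambda^{k-1}(\Omega)$ is consumed. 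Since every remaining step is an algebraic identity valid on all of $H\Lambda^{k-1}(\Omega)$, equation \eqref{commutator} holds for all such $v$ and all $k = 1, \dots, n$, which is the assertion.
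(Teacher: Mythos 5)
Your proposal is correct and matches the paper exactly: the paper's proof of Lemma~\ref{S-prop-com} is literally the one-line remark that \eqref{commutator} was already established, for arbitrary $v \in H\Lambda^{k-1}(\Omega)$, in the course of proving Lemma~\ref{S-prop-mean}, before that proof specialized to $v \in \P\Lambda^{k-1}(\T)$. Your replay of the derivation, including the observation that the vanishing of $\int_g \tr_g[I-S^{k-1}]\,dQ_{g,-}^{k-1}v$ only requires Lemma~\ref{S-prop-mean} applied to the piecewise polynomial $Q_{g,-}^{k-1}v$ and not any polynomial structure on $v$ itself, is precisely the justification the paper leaves implicit.
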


\subsection{The projection $R_h^k$}
For each $k$, $0 \le k \le n$, the operator $R^k = R_h^k:
H\Lambda^k(\Omega) \to \P_1^- \Lambda^k(\T)$ is defined by
\[
R^k u = S^ku + \sum_{f \in \Delta_k(\T)}
\Big(\int_f \ \tr_f [I - S^k] Q_f^k u \Big) \E_f^k\vol_f.
\]
Recall that the operator $S^k$ is local in the sense that 
$\tr_f\circ S^k$ can be seen as a local operator mapping
$H\Lambda^k(\Omega_f^e)$
onto $\P\Lambda^k(f)$. It is immediate from this and the properties of
of the projection $Q_f^k$, 
that $\tr_f\circ R^k$ also is local.
In fact, for any $T \in \T$, $(R^ku)|_{T}$ only depends on $u|_{\Omega_T^e}$.
Furthermore, if $f \in \Delta_0(\T_h)$ then $Q_f^0 = P_f^0$.
Therefore, it follows that for $k= 0$ the operator $R_h^0$
is identical to the operator $\pi_{0,h}^0$, used in the construction of
the projection $\pi_h^0$ in Section~\ref{ord-fcns} above.

The key properties of the operator $R_h^k$ are given in the theorem below.

\begin{thm}
\label{whitney-commute}
The operators $R_h^k : H\Lambda^k(\Omega) \to \P_1^-\Lambda^k(\T_h)$ 
are cochain projections. Furthermore, they 
satisfy property \eqref{R-key-prop}, i.e., 
\[
\int_f \tr_f R_h^k u = \int_f \tr_f u, \quad f \in \Delta_k(\T_h), \,
u \in \P\Lambda^k(\T_h).
\]
\end{thm}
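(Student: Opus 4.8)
The plan is to verify three things about $R^k=R_h^k$: that it reproduces Whitney forms (hence is a projection), that it commutes with $d$, and the mean value property \eqref{R-key-prop}. Since every term in the definition of $R^k$ lies in $\P_1^-\Lambda^k(\T)$, its range is contained in the Whitney space, and by \eqref{Whitney-rep} a Whitney form is completely determined by the numbers $\int_f\tr_f(\cdot)$, $f\in\Delta_k(\T)$. Thus all three assertions can be read off from the quantities $\int_f\tr_f R^k u$.

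I would first establish \eqref{R-key-prop}, since the projection property is then an immediate corollary. Fix $u\in\P\Lambda^k(\T)$ and $f\in\Delta_k(\T)$. Because $\int_f\tr_f\E_{f'}^k\vol_{f'}=\delta_{ff'}$, applying $\int_f\tr_f$ to the definition of $R^k$ isolates the $f$-term and gives
\[
\int_f\tr_f R^k u=\int_f\tr_f S^k u+\int_f\tr_f[I-S^k]Q_f^k u .
\]
Since $u$ restricts to an element of $\P\Lambda^k(\T_f^e)$ and $Q_f^k$ is a projection onto that space, $Q_f^k u=u$ on $\Omega_f^e$; using that $\tr_f\circ S^k$ sees only the argument on $\Omega_f^e$, the two $S^k$ contributions cancel and $\int_f\tr_f R^k u=\int_f\tr_f u$, which is \eqref{R-key-prop}. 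Taking $u\in\P_1^-\Lambda^k(\T)\subseteq\P\Lambda^k(\T)$ and invoking \eqref{Whitney-rep} then yields $R^k u=u$, so $R^k$ is a projection onto $\P_1^-\Lambda^k(\T)$.

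For the commuting relation I would split $R^k=S^k+C^k$, where $C^k u=\sum_{g\in\Delta_k(\T)}\big(\int_g\tr_g[I-S^k]Q_g^k u\big)\E_g^k\vol_g$. Comparing with the commutator formula \eqref{commutator} of Lemma~\ref{S-prop-com} (with $k$ replaced by $k+1$) shows that $dC^k=S^{k+1}d-dS^k$, whence $dR^k=dS^k+dC^k=S^{k+1}d$. Consequently $R^{k+1}d-dR^k=C^{k+1}d$, and it remains only to prove that $C^{k+1}d=0$. By \eqref{d-Q-commute} we have $Q_F^{k+1}dv=dQ_F^k v$, so with $w:=Q_F^k v\in\P\Lambda^k(\T_F^e)$ the vanishing of $C^{k+1}dv$ reduces to the single local identity
\[
\int_F\tr_F S^{k+1}(dw)=\int_F\tr_F dw,\qquad F\in\Delta_{k+1}(\T).
\]

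This last identity is the heart of the matter and the step I expect to be the main obstacle, since it is exactly where the weight functions $z_f^k$ earn their keep. I would expand $\int_F\tr_F S^{k+1}(dw)$ from the definition of $S^{k+1}$: the $M^{k+1}$ part contributes $\int_{\Omega_F^e}dw\wedge z_F^{k+1}$, while only the facets $g=F_j$ of $F$ survive in $\int_F\tr_F d\E_g^k\vol_g$, leaving $\sum_j(-1)^j\int_{F_j}\tr_{F_j}[I-S^k]Q_{F_j,-}^{k+1}(dw)$. Using \eqref{Q-decomp} in the form $Q_{F_j,-}^{k+1}dw=Q_{F_j}^k w-dQ_{F_j,-}^k w$, the fact that $Q_{F_j}^k w=w$ on $\Omega_{F_j}^e\subset\Omega_F^e$, and Lemma~\ref{S-prop-mean} to kill the term $\int_{F_j}\tr_{F_j}[I-S^k]dQ_{F_j,-}^k w$ (the local argument $Q_{F_j,-}^k w$ being legitimate by the locality of $\tr_{F_j}\circ S^k$), this collapses to $\sum_j(-1)^j\int_{F_j}\tr_{F_j}[I-S^k]w$. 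Stokes \eqref{stokes} turns $\sum_j(-1)^j\int_{F_j}\tr_{F_j}w$ into $\int_F\tr_F dw$, the desired right-hand side, while $\sum_j(-1)^j\int_{F_j}\tr_{F_j}S^k w=\int_F\tr_F dS^k w=\int_F\tr_F M^{k+1}dw=\int_{\Omega_F^e}dw\wedge z_F^{k+1}$, where I use $dS^k=dM^k=M^{k+1}d$ from Lemma~\ref{zfk-ident}. This term exactly cancels the $M^{k+1}$ contribution, proving the identity. The delicate point is precisely this cancellation: it succeeds only because $z_F^{k+1}$ was defined through \eqref{induction-hyp2} and \eqref{addconds} so that the $M^{k+1}$-part of $S^{k+1}dw$ agrees over $F$ with $dS^k w$, and the sign bookkeeping across the transition $\Omega_{F_j}^e\subset\Omega_F^e$ is governed by the $\delta$-complex of Lemma~\ref{delta-prop}, which the whole construction was engineered to make consistent.
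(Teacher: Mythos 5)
Your proposal is correct and follows essentially the same route as the paper: the projection and mean-value properties via $Q_f^k u = u$ for $u\in\P\Lambda^k(\T)$, and the commuting property via $dR^k = S^{k+1}d$ (Lemma~\ref{S-prop-com}) together with the vanishing of the correction term applied to exact forms. The only difference is that your final paragraph re-derives the needed case of Lemma~\ref{S-prop-mean} by direct expansion (using $dS^k = dM^k = M^{k+1}d$), whereas the paper simply invokes that lemma after reducing $Q_f^{k+1}du$ to $dQ_{f,-}^{k+1}du$ via \eqref{Q-decomp}.
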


\begin{proof}
As above we use a notation where we suppress the dependence on $h$.
It is a consequence of the projection property of the operators
$Q_f^k$ that if 
$u \in \P\Lambda^k(\T)$ then 
\[
R^k u = \sum_{f \in \Delta_k(\T)}
\Big(\int_f \ \tr_f u \Big) \E_f^k\vol_f.
\]
However, this implies the identity \eqref{R-key-prop}, and an
immediate further consequence is that $R^k$ is a projection onto 
$\P_1^-\Lambda^k(\T_h)$. 

It remains to show that $R^k$ commutes with the exterior derivative. 
From the definition of $R^k$ and Lemma~\ref{S-prop-com} we have
\[
dR^ku = dS^{k}u + \sum_{f \in \Delta_{k}(\T)}\Big(\int_f \tr_f [I - S^{k}]
Q_{f}^{k} u \Big) d \E_f^{k}\vol_f = S^{k+1}du. 
\]
However, $S^{k+1}du = R^{k+1}du$ since
\begin{align*}
R^{k+1}du - S^{k+1}du &= \sum_{f \in \Delta_{k+1}(\T)}\Big(\int_f \tr_f [I - S^{k+1}]
Q_{f}^{k+1} du \Big)  \E_f^{k+1}\vol_f \\
&= \sum_{f \in \Delta_{k+1}(\T)}\Big(\int_f \tr_f [I - S^{k+1}]
dQ_{f,-}^{k+1} du \Big)  \E_f^{k+1}\vol_f = 0,
\end{align*}
where the last identity follows from Lemma~\ref{S-prop-mean}.
\end{proof}

The operators $R_h^k$ introduced above are local operators in the
sense that $(R_h^ku)|_T$ only depends on $u|_{\Omega_T^e}$ for any $T
\in \T_h$. Furthermore, for any fixed $h$ the operator $R_h^k$
is a bounded operator on $H\Lambda^k(\Omega)$.
The discussion of more precise local bounds
is delayed until the final section of the paper.

\section{Construction of the Projection: The General Case}
\label{general}
We finally turn to the construction of the projections $\pi_h^k$ in
the general case, in which $\P \Lambda^k(\T_h)$ denotes any
family of spaces of the form $\P^-_r\Lambda^k(\T_h)$ or $\P_r\Lambda^k(\T_h)$,
such that the corresponding polynomial sequence $(\P\Lambda^k, d)$, given by
\eqref{pol-complex} is an exact complex. In particular, the Whitney
forms, $\P_1^-\Lambda^k(\T_h)$, are a subset of $\P\Lambda^k(\T_h)$,
and in the special case when $\P\Lambda^k(\T_h) =
\P_1^-\Lambda^k(\T_h)$
we will take $\pi_h^k$ to be the operator $R_h^k$ constructed above.

In the construction we will utilize a decomposition  
of $\P\Lambda^k(\T_h)$ of the form
\begin{equation}\label{decomp}
\P\Lambda^k(\T_h) = \bigoplus_{f \in \Delta_k(\T_h)} \E_{f}^k(\P_0\Lambda^k(f))
+ \bigoplus_{\stackrel{f \in
  \Delta(\T_h)}{\dim f \ge k}} E_{f}^k(\breve\P\Lambda^k(f)),
\end{equation}
where $\E_{f}^k$ is the extension operator defined in the previous
section,
mapping into the space of Whitney forms,
while $E_{f}^k$ is an harmonic extension operator mapping
into
$\0\P\Lambda^k(\T_{f,h})$. Furthermore, the space $\breve\P\Lambda^k(f) =
\0\P\Lambda^k(f)$ if $\dim f > k$, while  
\[
\breve \P\Lambda^k(f) = \{ u \in \P\Lambda^k(f) \, |\, 
\int_f u = 0\}, \quad \text{if } \dim f = k.
\]
The decomposition \eqref{decomp} can be seen a modification 
of the more standard decompositions \eqref{decomp-basic} and
\eqref{decomp-basic-}
in the sense that we are utilizing the special extension, $\E_f^k$,
for the constant term of the traces on $f$, when $\dim f = k$.
The existence of such a decomposition of the space
$\P\Lambda^k(\T_{f})$
is an immediate consequence of
the degrees of freedom \eqref{DOF}.

As in the case $k=0$, cf. Section~\ref{ord-fcns}, the projection
$\pi_h^k$
will be constructed from a sequence of operators $\pi_{m,h}^k$, where
$\pi_h^k = \pi_{n,h}^k$. The operators $\pi_{m,h}^k$ are defined
by a recursion of the form 
\begin{equation}
\label{pimdef}
\pi_{m,h}^k = \pi_{m-1,h}^k + \sum_{f \in \Delta_m(\T_h)}
E_{f}^k \circ \tr_f \circ P_{f}^k [I - \pi_{m-1,h}^k], \qquad k \le m \le n,
\end{equation}
where the operators $P_f^k$ are local projections defined with
respect to the macroelements $\Omega_f$,
generalizing the operators $P_f^0$ introduced in
Section~\ref{ord-fcns}. 
Furthermore, the operator $\pi_{k-1,h}^k$ will be taken to be the
operator $R_h^k$ defined in Section~\ref{sec:whitney} above.
Hence, to complete the definition of $\pi_h^k$, it remains to 
give precise definitions of the local operators $E_f^k$ and $P_f^k$.

\subsection{Extension operators}
As above, to simplify the notation, we suppress the dependence on $h$
throughout the discussion.  The extension operators $E_f^k$ are
generalizations of the harmonic extension operators $E_f^0$ used for
zero forms in Section~\ref{ord-fcns}.  Let us first assume that $f \in
\Delta(\T)$ such that $f$ is not a subset of the boundary of
$\Omega_f$. In this case, the harmonic extension $E_{f}^k$ maps
$\0\P\Lambda^k(f)$ to $\0\P\Lambda^k(\T_{f})$, where $0 \le k \le \dim
f$.  More specifically, we let $E_{f}^k\phi$ be characterized by
\[
\|d E_{f}^k\phi \|_{L^2(\Omega_f)} = \inf \{\|dv\|_{L^2(\Omega_f)} \,
| \, v \in \0\P\Lambda^k(\T_{f}),\,
\tr_f v = \phi \, \}.
\]
We should note that it is a consequence of the degrees of freedom of the
spaces $\P\Lambda^k(\T_{f})$ and $\0\P\Lambda^k(\T_{f})$ that there are
feasible solutions to this optimization problem.  As a consequence, an optimal
solution exists. However, the solution is in general not unique. The solution
is only determined up to adding functions $w$ in $\0\P\Lambda^k(\T_{f,h})$
satisfying $dw =0$ on $\Omega_f$ and $tr_f w = 0$. Therefore, to
obtain 
a well defined extension operator, we
need to introduce a corresponding gauge condition.  Hence,
for any $\phi \in \0\P\Lambda^k(f)$ we let $E_{f}^k\phi \in
\0\P\Lambda^k(\T_{f})$ 
be the solution of the system
\begin{equation}
\label{local-ext1}
\begin{array}{rl}
\<E_{f}^k\phi,d\tau\>_{\Omega_f} &= 0, \quad
\tau \in N(\tr_f;\0\P\Lambda^{k-1}(\T_{f})),
\\
\<d E_{f}^k\phi,dv\>_{\Omega_f} &= 0, \quad v \in
N(\tr_f;\0\P\Lambda^{k}(\T_{f})),
\end{array}
\end{equation}
and such that $\tr_f\circ E_f^k$ is the identity on
$\0\P\Lambda^k(f)$. Here $N(\tr_f;X)$ denotes the kernel of the
operator $\tr_f$ restricted to the function space $X$.
A key property of the extension operators $E_f^k$ is that they commute
with the exterior derivative.

\begin{lem}\label{unique-ext1}
Let $f \in \Delta(\T)$.
The extension operators $E_f^k : \0\P\Lambda^k(f) \to \0\P\Lambda^k(\T_{f})$
are well defined by the system \eqref{local-ext1} for $k=0,1, \ldots
,\dim f$, and  
for $k \ge 1$ we have the identity
\begin{equation}\label{d-E-commute}
E_{f}^k d \phi= d E_{f}^{k-1}\phi, \quad \phi \in \0\P\Lambda^{k-1}(f).
\end{equation}
Moreover, the kernel of $d$ restricted to 
$N(\tr_f;\0\P\Lambda^{k}(\T_{f}))$
is $dN(\tr_f;\0\P\Lambda^{k-1}(\T_{f}))$.
\end{lem}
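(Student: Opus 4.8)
The statement to prove is Lemma~\ref{unique-ext1}: the extension operator $E_f^k$ defined by the saddle-point system \eqref{local-ext1} (together with the side condition $\tr_f\circ E_f^k=\id$) is well-defined for $0\le k\le\dim f$, satisfies the commuting relation \eqref{d-E-commute}, and the kernel of $d$ on $N(\tr_f;\0\P\Lambda^k(\T_f))$ equals $dN(\tr_f;\0\P\Lambda^{k-1}(\T_f))$. Let me think about how these three claims relate.

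I would organize the proof around the last claim first, since it is the structural fact on which the other two rest. The key observation is that the spaces $N(\tr_f;\0\P\Lambda^k(\T_f))$, $k=0,\dots,\dim f$, form a subcomplex of $(\0\P\Lambda(\T_f),d)$: if $\tr_f v=0$ then $\tr_f(dv)=d(\tr_f v)=0$, so $d$ maps this kernel complex into itself. The assertion that $\ker(d)$ on $N(\tr_f;\0\P\Lambda^k(\T_f))$ equals $dN(\tr_f;\0\P\Lambda^{k-1}(\T_f))$ is precisely the statement that this kernel complex is exact at index $k$. I would deduce this from the exactness of the two complexes already granted in the excerpt: the full complex $(\0\P\Lambda(\T_f),d)$ and the trace complex $(\0\P\Lambda(f),d)$, related by the short exact sequence of complexes
\begin{equation*}
0\to N(\tr_f;\0\P\Lambda(\T_f))\to \0\P\Lambda(\T_f)\xrightarrow{\tr_f}\0\P\Lambda(f)\to 0,
\end{equation*}
where surjectivity of $\tr_f$ onto the trace space is exactly the existence of feasible extensions guaranteed by the degrees of freedom \eqref{DOF}. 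The long exact sequence in cohomology associated to this short exact sequence, combined with the exactness of the two outer complexes, forces the kernel complex to be exact, which is the third claim. (One must track the boundary correction term $\R$ at the top of the homogeneous complex \eqref{pol-complex-0}, but this only shifts indices and does not obstruct exactness in the relevant range.)

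With the kernel complex understood, well-definedness follows from a standard saddle-point argument. The first equation of \eqref{local-ext1} is a gauge fixing on the $N(\tr_f;\0\P\Lambda^{k-1})$ component, and the second is the Euler--Lagrange equation for the energy minimization that defines the harmonic extension. I would verify existence by exhibiting one feasible $v$ with $\tr_f v=\phi$ and projecting onto the orthogonal complement of the divergence-free gauge directions; uniqueness follows because any two solutions differ by a $w\in N(\tr_f;\0\P\Lambda^k)$ with $dw=0$ satisfying the first equation of \eqref{local-ext1}, and by the exactness just proved such $w$ lies in $dN(\tr_f;\0\P\Lambda^{k-1})$, whence the gauge condition $\langle w,d\tau\rangle=0$ for all such $\tau$ forces $w=0$. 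For the commuting identity \eqref{d-E-commute}, given $\phi\in\0\P\Lambda^{k-1}(f)$ I would check that $dE_f^{k-1}\phi$ satisfies the defining system \eqref{local-ext1} for $E_f^k(d\phi)$: its trace is $\tr_f(dE_f^{k-1}\phi)=d\tr_f E_f^{k-1}\phi=d\phi$, it automatically satisfies the second equation of \eqref{local-ext1} since $\langle d(dE_f^{k-1}\phi),dv\rangle=0$, and the first equation $\langle dE_f^{k-1}\phi,d\tau\rangle=0$ holds by the second equation in the definition of $E_f^{k-1}$ together with the fact that $d\tau\in dN(\tr_f;\0\P\Lambda^{k-1})\subset N(\tr_f;\0\P\Lambda^{k})$; invoking uniqueness then identifies $dE_f^{k-1}\phi$ with $E_f^k(d\phi)$.

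The main obstacle, I expect, is the bookkeeping in the long exact sequence argument: the homogeneous complexes \eqref{pol-complex-0} are exact but augmented by $\R$ at the top end (reflecting the constant volume forms), so the exactness of the kernel complex must be checked against the correct index, and the edge cases $k=0$ (where the gauge space $\0\P\Lambda^{-1}$ is empty and the first equation of \eqref{local-ext1} is vacuous) and $k=\dim f$ need separate inspection. I would isolate these boundary cases at the outset so that the interior-index argument proceeds uniformly.
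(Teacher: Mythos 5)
Your proposal is correct, and in substance it rests on the same pillars as the paper's proof: exactness of the kernel complex $(N(\tr_f;\0\P\Lambda(\T_f)),d)$, deduced from the exactness of the bulk complex $(\0\P\Lambda(\T_f),d)$ and the trace complex $(\0\P\Lambda(f),d)$ together with surjectivity of $\tr_f$ (existence of feasible extensions); well-posedness of the gauged system; and the commuting identity obtained by checking that $dE_f^{k-1}\phi$ satisfies the defining conditions for $E_f^k d\phi$ and invoking uniqueness. The differences are organizational but worth noting. For the kernel exactness the paper runs an induction on $k$ and performs the diagram chase by hand, using the already-constructed operator $E_f^{k-2}$ to lift $\phi\in\0\P\Lambda^{k-2}(f)$ and correct $\tau$; your long-exact-sequence packaging makes it clear that any feasible extension suffices for the lift, so no induction is needed for this part (the paper still needs induction for the uniqueness statement it threads through, but your route decouples the two). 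Your care about the augmentation by $\R$ at the top degree is the right instinct; in fact when $k=\dim f=n$ the space $N(\tr_f;\0\P\Lambda^n(\T_f))$ is trivial, so the only case where $H^n$ of the bulk complex could interfere is degenerate. For well-posedness the paper does not argue by constrained minimization plus gauge projection as you do; it instead sets up the mixed Hodge--Laplace system \eqref{local-ext2} with unknown $(\sigma,u)$, cites the abstract theory for unique solvability given exactness of the kernel complex at levels $k$ and $k-1$, and observes that $\sigma=0$, so that $u$ coincides with $E_f^k\phi$. The two are equivalent viewpoints on the same saddle-point problem; yours is more elementary and self-contained, the paper's has the advantage that the same mixed system immediately yields \eqref{d-E-commute} by exhibiting $(0,dE_f^{k-1}\phi)$ as its solution for the data $d\phi$. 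Both arguments are sound.
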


\begin{proof}
For $k= 0$  the first equation in the system \eqref{local-ext1}
should be omitted. The kernel of $d$ restricted to 
$N(\tr_f;\0\P\Lambda^{0}(\T_{f}))$ is just the zero function, and $E_f^0\phi$ is clearly uniquely determined by
the second equation and the extension property.
We proceed by induction on $k$. 

Assume that the statement of the
lemma holds for all levels less than $k$.
We first establish the characterization of  
the kernel of $d$, restricted to $N(\tr_f;\0\P\Lambda^{k}(\T_{f}))$.
Assume that  
$u \in N(\tr_f;\0\P\Lambda^{k}(\T_{f}))$ satisfies $du = 0$. 
Then, by the exactness of the complex $(\0\P\Lambda(\T_{f}),d)$, $u =
d\tau$ for some $\tau \in \0\P\Lambda^{k-1}(\T_{f})$. Furthermore, 
$d\tr_f \tau = \tr_f d \tau = \tr_f u = 0$. If $k=1$ this implies that
$\tau \in N(\tr_f;\0\P\Lambda^{0}(\T_{f}))$. For $k>1$
it follows from the exactness of $(\0\P\Lambda(f),d)$ that there is a
$\phi \in \0\P\Lambda^{k-2}(f)$ such that $d\phi = \tr_f\tau$.
However, the function 
\[
\sigma = \tau - dE^{k-2}\phi = \tau - E^{k-1}d\phi
\]
$\in N(\tr_f;\0\P\Lambda^{k-1}(\T_{f}))$ and satisfies $d\sigma = u$.
Hence the complex $(N(\tr_f;\0\P\Lambda(\T_{f})),d)$ is exact at
level $k$ in the sense that $dN(\tr_f;\0\P\Lambda^{k-1}(\T_{f}))$
is the kernel of $d$ restricted to $N(\tr_f;\0\P\Lambda^{k}(\T_{f}))$.

Consider a local Hodge Laplace problem  of the form:
\begin{equation}
\label{local-ext2}
\begin{array}{rl}
\<\sigma, \tau\>_{\Omega_f} - \<u,d\tau\>_{\Omega_f} &= 0, \quad
\tau \in N(\tr_f;\0\P\Lambda^{k-1}(\T_{f})),
\\
\<d\sigma, v\>_{\Omega_f} + \<d u,dv\>_{\Omega_f} &= 0, \quad v \in
N(\tr_f;\0\P\Lambda^{k}(\T_{f})),
\end{array}
\end{equation}
where the unknown $(\sigma,u) \in
N(\tr_f;\0\P\Lambda^{k-1}(\T_{f}))\times \0\P\Lambda^{k}(\T_{f})$,
and with $\tr_f u = \phi \in \0\P\Lambda^k(f)$. Since the complex
$(N(\tr_f;\0\P\Lambda(\T_{f})),d)$ is exact at
level $k$, it follows from the abstract theory of Hodge Laplace
problems, cf. for example \cite[Section 3]{bulletin}, that the system 
\eqref{local-ext2} has a unique solution. Furthermore, 
by the exactness of the same complex 
at level $k-1$, $\sigma = 0$.
Hence, $u$ and $E_f^k\phi$ satisfy the same conditions, and the
uniqueness of $E_f^k\phi$ follows by the uniqueness of $u$. 

Finally, to show the identity \eqref{d-E-commute}
we just observe that for any $\phi \in \0\P\Lambda^{k-1}(f)$,
the pair $(\sigma,u)$, with $\sigma = 0$ and 
$u= dE_f^{k-1}\phi \in \0\P\Lambda(\T_{f})$, 
satisfies the system \eqref{local-ext2} with 
$\tr_fdE_f^{k-1}\phi  = d\tr_fE_f^{k-1}\phi = d \phi$. By uniqueness
of such solutions we conclude that $dE_f^{k-1}\phi = E_f^kd\phi$.
This completes the induction argument, and the proof of the lemma.
\end{proof}


If $g \in
\Delta(\T_{f})$, with $k \le \dim g \le \dim f$ and $g \neq f$,
then $\tr_g E_{f}^k\phi  = 0$.
In the case that $f \subset \partial \Omega$, we will also have
that $f \subset \partial \Omega_f$. In this case, the definition
of the operator $E_{f}^k$ should be properly modified, such that
$E_{f}^k\phi$ is not required to be in $\0\P\Lambda^k(\T_{f})$,
but only required to be zero on the interior part of
$\partial \Omega_f$.  The key desired property is that
the extension of $E_{f}^k\phi$ from $\Omega_f$  to $\Omega$, by zero outside
$\Omega_f$, is in the global space $\P\Lambda^k(\T_h)$.

It is a consequence of the decomposition \eqref{decomp}
that any element $u$ of $\P\Lambda^k(\T)$ is uniquely determined by its
trace on $f$, $\tr_fu$, for all $f \in \Delta(\T)$ with   $\dim f \ge k$.
Furthermore, if $u$ is an element of the subspace given by
\begin{equation}\label{decomp-m}
\bigoplus_{f \in \Delta_k(\T)} \E_{f}^k(\P_0\Lambda^k(f))
+ \bigoplus_{\stackrel{f \in
  \Delta(\T)}{k \le \dim f \le m}} E_{f}^k(\breve\P\Lambda^k(f)),
\end{equation}
then $u$ is determined by $\tr_fu$ for all $f \in \Delta(\T)$ with
$k \le \dim f \le m$.
A key observation is the following.

\begin{lem}\label{invariant}
Assume that $u \in \P\Lambda^k(\T)$ belongs to the subspace given by 
\eqref{decomp-m}, where $k < m \le n$. Then its exterior derivative,
$du$, belongs to the
corresponding space 
\[
\bigoplus_{f \in \Delta_{k+1}(\T)} \E_{f}^{k+1}(\P_0\Lambda^{k+1}(f))
+ \bigoplus_{\stackrel{f \in
  \Delta(\T)}{k+1 \le \dim f \le m}}
E_{f}^{k+1}(\breve\P\Lambda^{k+1}(f)).
\]
\end{lem}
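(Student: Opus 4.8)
The plan is to expand $u$ according to the decomposition \eqref{decomp-m}, apply $d$ term by term, and check that each resulting piece lands in one of the two summands of the target space. Write $u = w + v$, where $w = \sum_{f \in \Delta_k(\T)} \E_f^k(c_f)$ collects the Whitney (constant-trace) contributions with $c_f \in \P_0\Lambda^k(f)$, and $v = \sum_{k \le \dim f \le m} E_f^k(w_f)$ collects the harmonic contributions with $w_f \in \breve\P\Lambda^k(f)$. By linearity $du = dw + dv$, and I would treat the two parts separately.

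For the Whitney part, the key point is that $w \in \P_1^-\Lambda^k(\T)$, since each $\E_f^k(c_f)$ lies in $\0\P_1^-\Lambda^k(\T_f) \subset \P_1^-\Lambda^k(\T)$. Because the Whitney forms form a subcomplex under $d$, we get $dw \in \P_1^-\Lambda^{k+1}(\T)$. Now the representation \eqref{Whitney-rep} shows that $\P_1^-\Lambda^{k+1}(\T) = \bigoplus_{g \in \Delta_{k+1}(\T)} \E_g^{k+1}(\P_0\Lambda^{k+1}(g))$, which is precisely the first summand of the target space. Hence $dw$ lies in the target. This is exactly why the special extension $\E_f^k$ is used for the constant part of the trace: its derivative stays inside the Whitney space.

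For the harmonic part I would use the commutativity of the extension operators with $d$, that is, identity \eqref{d-E-commute} of Lemma~\ref{unique-ext1}. For a face with $k < \dim f \le m$ one has $w_f \in \0\P\Lambda^k(f)$, so $\tr_{\partial f} dw_f = d\,\tr_{\partial f} w_f = 0$ and thus $dw_f \in \0\P\Lambda^{k+1}(f)$; when $\dim f = k+1$ this is a top form on $f$ with $\int_f dw_f = \int_{\partial f}\tr_{\partial f} w_f = 0$ by Stokes' theorem, so in either case $dw_f \in \breve\P\Lambda^{k+1}(f)$. Commutativity then gives $d E_f^k(w_f) = E_f^{k+1}(dw_f) \in E_f^{k+1}(\breve\P\Lambda^{k+1}(f))$, which sits in the second summand of the target since $k+1 \le \dim f \le m$.

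The one genuinely delicate case, and the step I expect to be the main obstacle, is the lowest-dimensional harmonic terms, those with $\dim f = k$. Here $\breve\P\Lambda^k(f)$ is the space of mean-value-zero top forms on $f$ rather than a boundary-vanishing space, and the commutativity identity cannot be invoked at the top level because $E_f^{k+1}$ is not defined on $f$. The resolution is to exploit the exactness of the complex $(\0\P\Lambda(f),d)$ in \eqref{pol-complex-0}: since $w_f$ is a top form on $f$ with $\int_f w_f = 0$, there is $\phi \in \0\P\Lambda^{k-1}(f)$ with $d\phi = w_f$. Applying \eqref{d-E-commute} at level $k$ then gives $E_f^k(w_f) = E_f^k(d\phi) = d E_f^{k-1}\phi$, so $d E_f^k(w_f) = 0$ and these terms contribute nothing to $du$. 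Collecting the three computations, $du = dw + dv$ is exhibited as a sum of an element of $\bigoplus_{g\in\Delta_{k+1}(\T)}\E_g^{k+1}(\P_0\Lambda^{k+1}(g))$ and an element of $\bigoplus_{k+1 \le \dim f \le m} E_f^{k+1}(\breve\P\Lambda^{k+1}(f))$, which is exactly membership in the target space.
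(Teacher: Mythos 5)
Your proof is correct, and for the first two classes of terms it coincides with the paper's argument: the subcomplex property of $(\P_1^-\Lambda(\T),d)$ handles the Whitney extensions $\E_f^k$, and the commutation identity \eqref{d-E-commute} handles the harmonic extensions with $\dim f > k$ (you are in fact slightly more careful than the paper in checking that $d$ of the trace lands in $\breve\P\Lambda^{k+1}(f)$, including the Stokes computation when $\dim f = k+1$). Where you genuinely diverge is the delicate case $\dim f = k$. The paper does not show these terms vanish; it observes that $dE_f^k\phi$ is supported in $\Omega_f$ and, by the variational definition of $E_f^k$, satisfies $\< dE_f^k\phi, dv\>_{\Omega_{f'}} = 0$ for every $(k{+}1)$-dimensional face $f'$ containing $f$ and every $v \in N(\tr_{f'};\0\P\Lambda^{k}(\T_{f'}))$, and then invokes the geometric decomposition to place $dE_f^k\phi$ in $\bigoplus_{f'} E_{f'}^{k+1}(\P\Lambda^{k+1}(f'))$. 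You instead use the exactness of $(\0\P\Lambda(f),d)$ at the top degree to write $w_f = d\phi$ with $\phi \in \0\P\Lambda^{k-1}(f)$, after which \eqref{d-E-commute} gives $E_f^k w_f = dE_f^{k-1}\phi$, so the term is exact, hence closed, and contributes nothing to $du$. This is legitimate and arguably cleaner: it yields the stronger conclusion that the $\dim f = k$ harmonic components are annihilated by $d$, and it uses only tools the paper has already established (exactness of \eqref{pol-complex-0} and Lemma~\ref{unique-ext1}), whereas the paper's route leans on the finer structure of the decomposition \eqref{decomp}. The only degenerate instance, $k=0$ with $f$ a vertex, where $\0\P\Lambda^{k-1}(f)$ is unavailable, is harmless since $\breve\P\Lambda^0(f)=0$ there.
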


\begin{proof}
It follows from the fact that $(\P_1^-\Lambda(\T),d)$ is a complex
that
$d\E_g^k\vol_g \in \bigoplus_{f \in \Delta_{k+1}(\T)}
\E_{f}^{k+1}(\P_0\Lambda^{k+1}(f))$
for any $g \in \Delta_k(\T)$. Furthermore, if $g \in \Delta(\T)$ and
$\dim g > k$ then 
\eqref{d-E-commute} implies that 
$dE_g^k\phi  = E_g^{k+1}d\phi$ for any $\phi \in
\breve\P\Lambda^k(g)$.
As a consequence, it only remains to check terms of the form $dE_g^k\phi$,
where $\phi \in \breve\P\Lambda^k(g)$ and $\dim g = k$.

Note that $dE_g^k\phi$ is identically zero outside $\Omega_g$. 
Furthermore, consider any $f \in \Delta_{k+1}(\T)$, 
with $g \in \Delta_k(f)$. Then $\Omega_f \subset \Omega_g$ 
and the space $N(\tr_f;
\0\P\Lambda^{k}(\T_f))$ can be identified with a subspace of 
$N(\tr_g;\0\P\Lambda^{k}(\T_g))$. 
Therefore, it follows from the definition of $E_g^k\phi$ that
\[
\< dE_g^k \phi, dv \>_{\Omega_f} = 0, \quad v \in N(\tr_f;
\0\P\Lambda^{k}(\T_f)), \, f \in \Delta_{k+1}(\T), \,  g \in \Delta_k(f).
\]
However, this implies that 
\begin{align*}
dE_g^k\phi &\in \bigoplus_{\stackrel{f \in
  \Delta_{k+1}(\T)}{g \in \Delta_k(f)}}
E_{f}^{k+1}(\P\Lambda^{k+1}(f))\\
&= \bigoplus_{\stackrel{f \in
  \Delta_{k+1}(\T)}{g \in \Delta_k(f)}} \E_{f}^{k+1}(\P_0\Lambda^{k+1}(f))
+ \bigoplus_{\stackrel{f \in
  \Delta_{k+1}(\T)}{g \in \Delta_k(f)}}
E_{f}^{k+1}(\breve\P\Lambda^{k+1}(f)).
\end{align*}
This completes the proof.
\end{proof}

The harmonic extension operator just discussed is the one we will use
in the construction of the local cochain projection $\pi^k$,
cf. \eqref{pimdef}.
However, in the theory below we will also utilize an alternative local
extension, defined with respect to spaces $\P\Lambda^k(\T_f)$ instead
of $\0\P\Lambda^k(\T_f)$. For $0 \le k \le n$ the operator 
$\tilde E_f^k : \P\Lambda^k(f)
\to \P\Lambda^k(\T)$ is defined by the conditions
\begin{equation}
\label{local-ext3}
\begin{array}{rl}
\<\tilde E_{f}^k\phi,d\tau\>_{\Omega_f} &= 0, \quad
\tau \in N(\tr_f;\P\Lambda^{k-1}(\T_{f})),
\\
\<d \tilde E_{f}^k\phi,dv\>_{\Omega_f} &= 0, \quad v \in
N(\tr_f;\P\Lambda^{k}(\T_{f})),
\end{array}
\end{equation}
in addition to the extension property $\tr_f \circ \tilde E_f^k \phi =
\phi$
for all $\phi \in \P\Lambda^k(f)$.  
In complete analogy with the discussion for the operators $E_f^k$
above, by utilizing the exactness of the complex
$(\P\Lambda(\T_f),d)$
instead of the exactness of $(\0\P\Lambda(\T_f),d)$, we can conclude
with the following analog of Lemma~\ref{unique-ext1}.

\begin{lem}\label{unique-ext2}
Let $f \in \Delta(\T)$.
The extension operators $\tilde E_f^k : \P\Lambda^k(f) \to \P\Lambda^k(\T_{f})$
are well defined by the system \eqref{local-ext3} for $k=0,1, \ldots
,\dim f$, and  
for $k \ge 1$ we have the identity
\[
\tilde E_{f}^k d \phi= d \tilde E_{f}^{k-1}\phi, 
\quad \phi \in \P\Lambda^{k-1}(f).
\]
Moreover, the kernel of $d$ restricted to 
$N(\tr_f;\P\Lambda^{k}(\T_{f}))$
is $dN(\tr_f;\P\Lambda^{k-1}(\T_{f}))$.
\end{lem}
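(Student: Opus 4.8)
The plan is to follow the proof of Lemma~\ref{unique-ext1} essentially verbatim, systematically replacing the zero-trace spaces $\0\P\Lambda^\bullet(\T_{f})$ and $\0\P\Lambda^\bullet(f)$ by their counterparts $\P\Lambda^\bullet(\T_{f})$ and $\P\Lambda^\bullet(f)$ without boundary conditions, and correspondingly invoking the exactness of the complexes $(\P\Lambda(\T_{f}),d)$ and $(\P\Lambda(f),d)$ in place of $(\0\P\Lambda(\T_{f}),d)$ and $(\0\P\Lambda(f),d)$. I would argue by induction on $k$. For the base case $k=0$ the first equation of \eqref{local-ext3} is dropped; since a $d$-closed element of $N(\tr_f;\P\Lambda^0(\T_{f}))$ is a constant function whose trace on $f$ vanishes, and hence is identically zero, the kernel of $d$ on $N(\tr_f;\P\Lambda^0(\T_{f}))$ is trivial, so $\tilde E_f^0\phi$ is uniquely determined by the second equation together with the extension property $\tr_f\tilde E_f^0\phi = \phi$.

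For the inductive step, assuming the full statement (including the commuting identity) at all levels below $k$, I would first establish that the kernel of $d$ restricted to $N(\tr_f;\P\Lambda^{k}(\T_{f}))$ equals $dN(\tr_f;\P\Lambda^{k-1}(\T_{f}))$. Given $u \in N(\tr_f;\P\Lambda^{k}(\T_{f}))$ with $du=0$, exactness of $(\P\Lambda(\T_{f}),d)$ yields $u = d\tau$ for some $\tau \in \P\Lambda^{k-1}(\T_{f})$, and then $d\tr_f\tau = \tr_f u = 0$. For $k>1$, exactness of $(\P\Lambda(f),d)$ provides $\psi \in \P\Lambda^{k-2}(f)$ with $d\psi = \tr_f\tau$, and $\sigma = \tau - \tilde E_f^{k-1}d\psi = \tau - d\tilde E_f^{k-2}\psi$ then lies in $N(\tr_f;\P\Lambda^{k-1}(\T_{f}))$ and satisfies $d\sigma = u$, using the commuting identity at level $k-1$ supplied by the induction hypothesis. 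This reproduces the corresponding step of Lemma~\ref{unique-ext1}.

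The one place where the passage from $\0\P\Lambda$ to $\P\Lambda$ is not purely cosmetic, and hence the main point to watch, is the case $k=1$ of this kernel characterization: the full de Rham complex begins with $\R \to \P\Lambda^0(f)$, so $d\tr_f\tau = 0$ only forces $\tr_f\tau$ to equal some constant $c$ rather than to vanish. I would resolve this by subtracting the global constant function, setting $\sigma = \tau - c \in N(\tr_f;\P\Lambda^0(\T_{f}))$, which still satisfies $d\sigma = u$. With the kernel characterization in hand, the remainder is identical to Lemma~\ref{unique-ext1}: the local Hodge Laplace problem analogous to \eqref{local-ext2}, posed on $N(\tr_f;\P\Lambda^{k-1}(\T_{f}))\times\P\Lambda^{k}(\T_{f})$ with $\tr_f u = \phi$, has a unique solution by the abstract theory (cf. \cite[Section 3]{bulletin}) since the complex is exact at level $k$; exactness at level $k-1$ forces $\sigma=0$, giving uniqueness of $\tilde E_f^k\phi$. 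Finally, the commuting identity $\tilde E_f^k d\phi = d\tilde E_f^{k-1}\phi$ follows by verifying that the pair $(\sigma,u)=(0,\,d\tilde E_f^{k-1}\phi)$ solves the same system with trace $d\phi$ and appealing to uniqueness. The main obstacle is thus not a genuinely new difficulty but merely ensuring that the nontrivial level-$0$ term $\R$ of the full complex is correctly absorbed at $k=1$.
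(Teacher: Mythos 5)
Your proposal is correct and takes essentially the same approach as the paper, which in fact omits the proof of Lemma~\ref{unique-ext2} entirely, stating only that it follows ``in complete analogy'' with Lemma~\ref{unique-ext1} by substituting the exactness of $(\P\Lambda(\T_f),d)$ and $(\P\Lambda(f),d)$ for that of the zero-trace complexes. You have correctly identified and resolved the one point where the analogy is not purely mechanical, namely that at $k=1$ the complex $(\P\Lambda(f),d)$ begins with $\R\to\P\Lambda^0(f)$, so $\tr_f\tau$ is only forced to be constant and must be subtracted off.
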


\subsection{Local projections}
Let $f \in \Delta(\T)$ and recall the definition of the spaces 
$\breve\P\Lambda^k(f)$ given above, as $\0\P\Lambda^k(f)$ if $k$ is less
than dimension of $f$, and as the subspace of $\P\Lambda^k(f)$
consisting of functions with zero mean value if $k = \dim f$.
Hence, as an alternative to \eqref{pol-complex-0}, we can state that the 
complex
\[ 
\begin{CD}
0 \to \breve\P\Lambda^0(f) @>d>> \breve\P\Lambda^{1}(f)
 @>d>> \cdots @>d>> \breve\P\Lambda^{\dim f}(f)\to 0
\end{CD}
\]
is exact. In particular, this means that the first operator, $d = d^0$,
is one--one and the last  operator, $d=d^{\dim f -1}$, is onto.
In order to define the local projections $P_f^k$, appearing in
\eqref{pimdef}, we 
will use the spaces $\breve\P(f)$ to introduce proper local spaces, 
$\breve\P\Lambda^k(\T_{f})$.   For $0 \le k < \dim f$ these
spaces lie between $\P\Lambda^k(\T_{f})$ and
$\0\P\Lambda^k(\T_{f})$, i.e.,
\[
\0\P\Lambda^k(\T_{f}) \subset \breve\P\Lambda^k(\T_{f})
\subset
\P\Lambda^k(\T_{f}).
\]
More precisely, for $0 \le k \le \dim f$ the space $\breve\P\Lambda^k(\T_{f})$
is defined by
\[
\breve\P\Lambda^k(\T_f) = \{ u \in \P\Lambda^k(\T_f) \, |\, \tr_f \in
\breve\P\Lambda^k(f) \, \},
\]
while we let $\breve\P\Lambda^k(\T_f) = \P\Lambda^k(\T_f)$ for $\dim f <
k \le n$.
We note that for $k=0$ this definition is consistent with the 
definition of the space 
$\breve \P_r\Lambda^0(\T_{f})$ used in Section~\ref{ord-fcns}.

We observe that $d\breve\P\Lambda^k(\T_{f,h}) \subset
\breve\P\Lambda^{k+1}(\T_{f,h})$. In other words,
$(\breve\P\Lambda^k(\T_f), d)$, given by 
\[
\begin{CD}
0 \to  \breve\P\Lambda^0(\T_{f}) @>d>> \breve\P\Lambda^{1}(\T_{f})
 @>d>> \cdots @>d>> \breve\P\Lambda^{n}(\T_{f}) \to 0,
\end{CD}
\]
is a complex. We also have the following:

\begin{lem}\label{hat-complex-exact}
The  complex
$(\breve\P\Lambda^k(\T_f), d)$
is exact. 
 \end{lem}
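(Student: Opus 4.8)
The plan is to prove exactness by a direct diagram chase that plays the three exact complexes already established against one another: the full local complex $(\P\Lambda(\T_f),d)$, the trace complex $(\P\Lambda(f),d)$, and the modified trace complex $(\breve\P\Lambda(f),d)$, together with the surjectivity of $\tr_f\colon \P\Lambda^k(\T_f)\to\P\Lambda^k(f)$ and the existence of a trace right inverse (the extension operator $\tilde E_f^k$ of Lemma~\ref{unique-ext2}). Fix $k\ge 1$ and let $u\in\breve\P\Lambda^k(\T_f)$ with $du=0$; we seek a primitive lying in $\breve\P\Lambda^{k-1}(\T_f)$. First I would use exactness of $(\P\Lambda(\T_f),d)$ to produce some $w\in\P\Lambda^{k-1}(\T_f)$ with $dw=u$. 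This $w$ is a primitive in the full local space, but its trace $\tr_f w$ need not lie in $\breve\P\Lambda^{k-1}(f)$, so the entire content of the proof is to correct $w$ by a closed form on $\Omega_f$ so as to repair the trace without disturbing $dw=u$.

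For the correction step, note that $\tr_f u=\tr_f dw=d\,\tr_f w$ is a closed element of $\breve\P\Lambda^k(f)$. Invoking exactness of $(\breve\P\Lambda(f),d)$ I obtain $\psi\in\breve\P\Lambda^{k-1}(f)$ with $d\psi=\tr_f u$; consequently $\tr_f w-\psi$ is a closed $(k-1)$-form on $f$. Exactness of $(\P\Lambda(f),d)$ then lets me write $\tr_f w-\psi=d\chi$ for some $\chi\in\P\Lambda^{k-2}(f)$ when $k\ge 2$, while for $k=1$ the closed $0$-form $\tr_f w-\psi$ is simply a constant on the connected simplex $f$. Extending $\chi$ to $\tilde\chi\in\P\Lambda^{k-2}(\T_f)$ with $\tr_f\tilde\chi=\chi$ and setting $c:=d\tilde\chi$ (respectively, $c$ the corresponding constant function on the connected set $\Omega_f$ when $k=1$), I obtain a closed form $c\in\P\Lambda^{k-1}(\T_f)$ with $\tr_f c=\tr_f w-\psi$. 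Then $v:=w-c$ satisfies $dv=dw=u$ and $\tr_f v=\psi\in\breve\P\Lambda^{k-1}(f)$, so $v\in\breve\P\Lambda^{k-1}(\T_f)$ is the required primitive. Exactness at level $0$ is separate and immediate: a closed element of $\breve\P\Lambda^0(\T_f)$ is constant on the connected set $\Omega_f$ and has trace in $\breve\P\Lambda^0(f)$, which forces that constant to vanish.

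The step I expect to be the main obstacle is precisely this trace correction, and in particular the degree bookkeeping relative to $m:=\dim f$: when the form degree exceeds $m$ the relevant trace spaces $\breve\P\Lambda(f)$ and $\P\Lambda(f)$ vanish, so one must check that the argument degenerates correctly (the trace constraint becomes vacuous, $\tr_f w=0$, and one takes $\psi=0$, $v=w$), whereas at the top degree $k=m$ exactness of $(\breve\P\Lambda(f),d)$ must be used in its surjective form. A cleaner but more abstract route, which I would use to double-check the outcome, is to observe that $\breve\P\Lambda(\T_f)$ is the fiber product of $\P\Lambda(\T_f)$ and $\breve\P\Lambda(f)$ over $\P\Lambda(f)$ along $\tr_f$; since $\tr_f$ is onto, this yields a short exact sequence of complexes
\[
0\to\breve\P\Lambda(\T_f)\to\P\Lambda(\T_f)\oplus\breve\P\Lambda(f)\xrightarrow{(a,b)\mapsto\tr_f a-b}\P\Lambda(f)\to 0,
\]
whose associated Mayer--Vietoris long exact sequence computes the cohomology of $\breve\P\Lambda(\T_f)$ from that of the three known complexes. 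Since $\P\Lambda(\T_f)$ and $\P\Lambda(f)$ each have cohomology $\R$ in degree $0$ and zero above, $\breve\P\Lambda(f)$ has vanishing cohomology throughout, and the induced degree-$0$ map $\R\to\R$ is the identity on constants, every cohomology group of $\breve\P\Lambda(\T_f)$ vanishes, which is the assertion.
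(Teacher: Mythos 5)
Your main argument is correct and is, at bottom, the same mechanism as the paper's proof: produce a primitive in the unconstrained local space and then repair its trace on $f$ by subtracting a closed form built from an extension operator, using exactness of the trace complexes. The differences are organizational. The paper splits into three cases ($k\le m$, $k=m+1$, $k>m+1$, with $m=\dim f$) and, in the first case, takes the primitive $\phi$ of $\tr_f u$ in $\breve\P\Lambda^{k-1}(f)$ \emph{first}, subtracts $d\tilde E_f^{k-1}\phi$ from $u$, and then invokes the kernel characterization $dN(\tr_f;\P\Lambda^{k-1}(\T_f))=\ker d|_{N(\tr_f;\P\Lambda^k(\T_f))}$ from Lemma~\ref{unique-ext2}; you instead take a global primitive $w$ first and correct its trace, which lets you run one uniform chase needing only surjectivity of $\tr_f$ (via $\tilde E_f^{k-2}$) and exactness of $(\P\Lambda(f),d)$ and $(\breve\P\Lambda(f),d)$ --- a slightly more elementary ingredient list. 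Your fiber-product/Mayer--Vietoris argument is a genuinely different and cleaner packaging: the short exact sequence of complexes
\[
0\to\breve\P\Lambda(\T_f)\to\P\Lambda(\T_f)\oplus\breve\P\Lambda(f)\to\P\Lambda(f)\to 0
\]
reduces the lemma to bookkeeping in the long exact cohomology sequence, and it makes transparent exactly which three exactness hypotheses are being consumed; the paper's hands-on version has the advantage of staying within the elementary language used elsewhere in Section~4.

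One caution: your parenthetical description of the degenerate regime ``form degree exceeds $m$'' is wrong at $k=m+1$. There $\tr_f u=0$, but the primitive $w$ has degree $m=\dim f$, so $\tr_f w$ is a generally nonzero top form on $f$, and $v=w$ need \emph{not} lie in $\breve\P\Lambda^{m}(\T_f)$, since membership requires $\int_f\tr_f v=0$. This is precisely the case to which the paper devotes a separate paragraph, and it is where the lemma has real content beyond the unmodified complex. Your main chase does handle it correctly if followed literally (take $\psi=0$, use surjectivity of $d:\P\Lambda^{m-1}(f)\to\P\Lambda^{m}(f)$ to write $\tr_f w=d\chi$, and subtract $d\tilde E_f^{m-1}\chi$ so the corrected primitive has vanishing trace), so the slip is in the commentary rather than in the proof --- but the shortcut ``$\psi=0$, $v=w$'' must not be taken there.
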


\begin{proof}
Let $m = \dim f$, and 
assume that $u \in \breve\P\Lambda^k(\T_f)$ satisfies $du = 0$.
We need to show that there is a $\sigma \in \breve\P\Lambda^{k-1}(\T_f)$ such
that $d \sigma= u$. For $k> m+1$ this follows from the exactness of
the complex $(\P\Lambda(\T_f),d)$. Assume next that  
$k \le m$. Since $d\tr_f u = \tr_f du= 0$, 
it follows from the exactness of the complex $(\breve\P\Lambda(f),d)$
that
there is 
$\phi \in \breve\P\Lambda^{k-1}(f)$ such that $d\phi = \tr_f u$.
Therefore $u - d\tilde E_f^{k-1}\phi$ is in $N(\tr_f,
\P\Lambda^k(\T_f))$ and $d(u - d\tilde E_f^{k-1}\phi) = 0$.
By Lemma~\ref{unique-ext2}, there is a 
$\tau \in N(\tr_f, \P\Lambda^{k-1}(\T_f))$ such that
$d\tau = u - d\tilde E_f^{k-1}\phi$.
Hence, the function $\sigma = \tau + \tilde E_f^{k-1}\phi$
satisfies $\tr_f \sigma = \phi \in \breve\P\Lambda^{k-1}(f)$. So
$\sigma \in \breve \P\Lambda^{k-1}(\T_f)$ and $d \sigma = u$.

Finally, we have to consider the case when 
$k= m+1$. The exactness of the complex $(\P\Lambda(\T_f),d)$
and the assumption $du=0$ implies that there is $\tau \in
\P\Lambda^{m}(\T_f)$
such that $d\tau = u$. Furthermore, the exactness of
$(\P\Lambda(f),d)$
implies that there is a $\phi \in \P\Lambda^{m-1}(f)$
such that $d\phi = \tr_f \tau$. The function 
$\sigma = \tau - d\tilde E_f^{m-1}\phi$ has vanishing trace on $f$.
Therefore, it is in $\breve\P\Lambda^m(\T_f)$, and $d\sigma = u$.
\end{proof}

We are now ready to define a local projections
$P_{f}^k : H\Lambda^k(\Omega_f) \to
\breve\P\Lambda^k(\T_{f})$ satisfying
\begin{align*}
\<P_{f}^k u,d\tau\>_{\Omega_f} &= \<u,d\tau\>_{\Omega_f}, \quad \tau \in
\breve\P\Lambda^{k-1}(\T_{f}),\\
\<d P_{f}^k u,dv\>_{\Omega_f} &= \<du,dv \>_{\Omega_f},
\quad v \in  \breve\P\Lambda^{k}(\T_{f}).
\end{align*}
The operator $P_{f}^k$ is a well defined projection onto 
$\breve\P\Lambda^k(\T_{f})$
as a consequence of Lemma~\ref{hat-complex-exact}. When $k=0$,
the space $d\P\Lambda^{-1}(\T_{f})$ should be interpreted
as the space of constants on $\Omega_f$, such that 
$P_{f}^0$ is exactly the projection defined in Section~\ref{ord-fcns}.
With this definition it is straightforward to check that
the projections $\P_f^k$ commute with the exterior derivative, i.e., 
\begin{equation}\label{d-P-commute}
P_{f}^kdu = dP_{f}^{k-1}u, \quad 0 < k \le n.
\end{equation}

\subsection{Properties of the Operators $\pi_h^k$}
The definitions of the operators $E_f^k$ and $\P_f^k$ given above
complete the construction of the operators $\pi^k = \pi_h^k$
given by the recursion \eqref{pimdef}. 
Here we shall derive two key properties of these operators,
namely that they are projections onto $\P\Lambda^k(\T)$ and that they
commute with the exterior derivative.
It is also clear from the construction that the operator 
$\pi_h^k$ is local, and, for each triangulation $\T
= \T_h$, $\pi_h^k$ is well defined as an operator on
$H\Lambda^k(\Omega)$.
However, the derivation of more precise bounds will
be delayed until the next section.

We recall that the recursion \eqref{pimdef} is initialized by choosing 
$\pi_{k-1}^k = R^k$, i.e., the special projection onto the Whitney forms
constructed in Section~\ref{sec:whitney} above.  
Therefore, we obtain from Theorem~\ref{whitney-commute} that 
\begin{equation}
\label{whitney-commute-r}
d \pi_{k-1}^k u = \pi_k^{k+1} du, \quad k=0,1, \ldots , n-1,
\end{equation}
and for $k= 0$ the two operators $\pi_{-1}^0$ and $\pi_0^0$ are
the same.
Furthermore, for functions in $\P\Lambda^k(\T)$ 
the operator $\pi_{k-1}^k$ preserves the integral of the 
trace over all subsimplexes of dimension $k$, i.e., 
\begin{equation}\label{R-key-prop-r}
\int_f \tr_f \pi_{k-1}^k u = \int_f \tr_f u, \quad f \in \Delta_k(\T),
\, u \in \P\Lambda^k(\T).
\end{equation}
In other words, if $u \in \P\Lambda^k(\T)$, then 
$(u - \pi_{k-1}^ku)|_{\Omega_f} \in \breve\P\Lambda^k(\T_f)$ for $f \in
\Delta_k(\T)$ and 
$k \ge 1$.

We observe that it follows from \eqref{pimdef} and the properties of 
the extension operators $E_f^k$, that if $f \in
\Delta_m(\T_h)$, with $m \ge k$, then
\begin{equation}\label{pi_m-recursion}
\tr_f \pi_{m}^k u = \tr_f(\pi_{m-1}^ku + P_{f}^k [u -  \pi_{m-1}^k u]).
\end{equation}
On the other hand, 
\begin{equation}\label{pi-preserve}
\tr_g \pi_m^ku = \tr_g \pi_{m-1}^ku, \quad g \in \Delta(\T), \, k \le \dim g < m.
\end{equation}
These observations are the key tools to show that $\pi^k$ is a projection.

\begin{thm}\label{projection}
The operators $\pi_h^k$ are projections onto $\P\Lambda^k(\T_h)$.
\end{thm}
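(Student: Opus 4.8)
The plan is to mimic the proof strategy already used for the zero-form case in Lemma~1.1, namely to show that $\pi^k = \pi_h^k$ reproduces elements of $\P\Lambda^k(\T)$ by establishing, for each $u \in \P\Lambda^k(\T)$, that the trace $\tr_f \pi^k u = \tr_f u$ for every $f \in \Delta(\T)$ with $\dim f \ge k$. Since the decomposition \eqref{decomp} shows that an element of $\P\Lambda^k(\T)$ is uniquely determined by these traces, this suffices. The natural device is induction on $m$, the dimension of the face, proving the statement
\[
\tr_f \pi_m^k u = \tr_f u, \qquad f \in \Delta(\T), \ k \le \dim f \le m,
\]
and then invoking $\pi^k = \pi_n^k$.

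First I would fix $u \in \P\Lambda^k(\T)$ and treat the base case $m = k-1$, which is handled by the initialization $\pi_{k-1}^k = R^k$. Here the work is already done: \eqref{R-key-prop-r} (equivalently Theorem~\ref{whitney-commute}, property \eqref{R-key-prop}) tells us that $\int_f \tr_f \pi_{k-1}^k u = \int_f \tr_f u$ for all $f \in \Delta_k(\T)$. This is the correct base-case statement, since for faces of dimension exactly $k$ the degrees of freedom involve only the mean value of the trace (by the structure of $\breve\P\Lambda^k(f)$ when $\dim f = k$), so matching integrals of traces is what the induction needs to launch. Concretely, as noted just before the theorem, this says $(u - \pi_{k-1}^k u)|_{\Omega_f} \in \breve\P\Lambda^k(\T_f)$ for $f \in \Delta_k(\T)$.

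For the inductive step, assume the trace-matching property holds up to dimension $m-1$ (with $m \ge k$). For $g \in \Delta(\T)$ with $k \le \dim g < m$, equation \eqref{pi-preserve} gives $\tr_g \pi_m^k u = \tr_g \pi_{m-1}^k u = \tr_g u$, so lower-dimensional traces are undisturbed. The crux is $f \in \Delta_m(\T)$: by the induction hypothesis the residual $w := u - \pi_{m-1}^k u$ has $\tr_g w = 0$ for all $g \in \Delta(\T)$ with $k \le \dim g < m$ contained in $f$, so $\tr_f w \in \0\P\Lambda^k(f)$ (or $\breve\P\Lambda^k(f)$ in the base-dimension case), whence $w|_{\Omega_f} \in \breve\P\Lambda^k(\T_f)$ and the local projection $P_f^k$ reproduces it: $P_f^k w = w$ on $\Omega_f$. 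Then \eqref{pi_m-recursion} gives
\[
\tr_f \pi_m^k u = \tr_f\bigl(\pi_{m-1}^k u + P_f^k[u - \pi_{m-1}^k u]\bigr) = \tr_f\bigl(\pi_{m-1}^k u + w\bigr) = \tr_f u,
\]
completing the induction.

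The main obstacle I anticipate is verifying carefully that $w|_{\Omega_f}$ genuinely lands in $\breve\P\Lambda^k(\T_f)$, so that $P_f^k$ acts as the identity on it. This requires that vanishing of all lower-dimensional traces of $w$ on subsimplices of $f$ forces $\tr_f w$ into $\0\P\Lambda^k(f)$ (for $\dim f > k$), together with the delicate bookkeeping at $\dim f = k$, where it is not the full trace but only its mean value that $\pi_{k-1}^k = R^k$ controls. The projection $P_f^k$ maps into $\breve\P\Lambda^k(\T_f)$, whose defining trace condition on $f$ is exactly $\breve\P\Lambda^k(f)$, so the base-dimension case must lean on \eqref{R-key-prop-r} to supply the vanishing-mean condition rather than a full vanishing-trace condition. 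Matching this boundary-condition structure between the extension operators $E_f^k$, the local spaces $\breve\P\Lambda^k(\T_f)$, and the special role of $R^k$ is where the argument demands the most care.
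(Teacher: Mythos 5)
Your proposal is correct and follows essentially the same route as the paper's proof: induction on the face dimension $m$, launched by the mean-value property \eqref{R-key-prop-r} of $R^k=\pi_{k-1}^k$ (so that the residual lies in $\breve\P\Lambda^k(\T_f)$ for $\dim f = k$), and advanced via \eqref{pi-preserve} and \eqref{pi_m-recursion} together with the fact that $P_f^k$ reproduces residuals whose lower-dimensional traces vanish. The point you flag as delicate --- that matching only the mean of the trace on $k$-faces is exactly what membership in $\breve\P\Lambda^k(\T_f)$ requires there --- is precisely the observation the paper records just before the theorem.
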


\begin{proof}
Assume throughout  that $u \in \P\Lambda^k(\T)$. We have to show that
$\pi^k u = u$. We will argue that
\begin{equation}\label{pi_m-property}
\tr_f \pi_{m}^k u = \tr_f u, \quad \text{if } f \in
\Delta(\T),\quad k \le \dim f \le m,
\end{equation}
for $m = k, k+1, \ldots ,n$.
This will imply the desired result,
since functions in $\P\Lambda^k(\T)$ are uniquely determined 
by their traces on $f \in \Delta(\T)$.
We will prove \eqref{pi_m-property} by induction on $m$.
Recall that
$u - \pi_{k-1}^k u \in \breve\P\Lambda^k(\T_{f})$ for any $f \in
\Delta_k(\T)$.
As a consequence, $P_{f}^k(u - \pi_{k-1}^k u) = u - \pi_{k-1}^ku$,
and therefore \eqref{pi_m-property}, with $m= k$,  follows 
from \eqref{pi_m-recursion}.

Next, if \eqref{pi_m-property} holds for $m$ replaced by $m-1$, then
\eqref{pi-preserve} implies that
$\tr_g \pi_{m}^k u = \tr_g \pi_{m-1}^k u = \tr_g u$ for all $g \in
\Delta(\T)$, with $k \le \dim f < m$.
So it only remains to show the identity \eqref{pi_m-property}
for $f \in \Delta_m(\T)$.
However, for each $f \in \Delta_m(\T)$, we have
$(u - \pi_{m-1}^k u)|_{\Omega_f} \in \breve \P\Lambda^{k}(\T_{f})$.
Hence $P_{f}^k( u- \pi_{m-1}^k u ) =( u- \pi_{m-1}^k
u)|_{\Omega_f}$, and then \eqref{pi_m-recursion} implies that 
$\tr_f \pi_m^ku = \tr_f u$.
We have therefore verified that the operator $\pi_m^k$ satisfies
property \eqref{pi_m-property}.
This completes the proof.
\end{proof}

To show that the projections $\pi_h^k$ are cochain projections,
the following observation is useful.

\begin{lem}\label{pi-obsevation}
Assume that $0 < k \le n$ and that $u \in H\Lambda^{k-1}(\Omega)$.
For any $f \in \Delta_k(\T)$ the function 
$
d(\pi_{k-1}^{k-1}u - \pi_{k-2}^{k-1}u)|_{\Omega_f} \in
\breve\P\Lambda^k(\T_f).
$
\end{lem}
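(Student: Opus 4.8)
The plan is to reduce the membership claim to a single mean-value identity on $f$ and then evaluate it by Stokes' theorem. Write $v = \pi_{k-1}^{k-1}u - \pi_{k-2}^{k-1}u$. The single recursion step \eqref{pimdef} (with $k$ replaced by $k-1$ and $m = k-1$) that defines $\pi_{k-1}^{k-1}$ from its initialization $\pi_{k-2}^{k-1} = R^{k-1}$ gives
\[
v = \sum_{g \in \Delta_{k-1}(\T)} E_g^{k-1}\,\tr_g\, P_g^{k-1}[I - \pi_{k-2}^{k-1}]u,
\]
so that $v \in \P\Lambda^{k-1}(\T)$. Because $(\P\Lambda(\T),d)$ is a complex, $dv \in \P\Lambda^k(\T)$, and hence $dv|_{\Omega_f} \in \P\Lambda^k(\T_f)$ for any $f \in \Delta_k(\T)$. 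Since $\dim f = k$, the space $\breve\P\Lambda^k(\T_f)$ is precisely the set of $w \in \P\Lambda^k(\T_f)$ with $\int_f \tr_f w = 0$; it therefore suffices to prove $\int_f \tr_f dv = 0$.

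Next I would apply Stokes' theorem \eqref{stokes}. Writing $f = [x_0,\ldots,x_k]$ and letting $f_j \in \Delta_{k-1}(\T)$ denote the face obtained by deleting $x_j$, we obtain
\[
\int_f \tr_f dv = \sum_{j=0}^{k} (-1)^j \int_{f_j} \tr_{f_j} v,
\]
so it remains to show that each term $\int_{f_j} \tr_{f_j} v$ vanishes. To evaluate $\tr_{f_j} v$, note that every index $g$ in the sum defining $v$ and every face $f_j$ share the common dimension $k-1$. By the locality property of the harmonic extensions recorded after Lemma~\ref{unique-ext1} (together with the fact that each $E_g^{k-1}(\cdot)$ is supported in $\Omega_g$), one has $\tr_{f_j} E_g^{k-1}(\cdot) = 0$ for every $g \neq f_j$. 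Hence only the term $g = f_j$ survives, and the extension property $\tr_{f_j}\circ E_{f_j}^{k-1} = \id$ yields
\[
\tr_{f_j} v = \tr_{f_j} P_{f_j}^{k-1}[I - \pi_{k-2}^{k-1}]u.
\]

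Finally, since $\dim f_j = k-1$ coincides with the form degree, the local projection $P_{f_j}^{k-1}$ maps into $\breve\P\Lambda^{k-1}(\T_{f_j})$, whose defining condition forces $\tr_{f_j} P_{f_j}^{k-1}[I - \pi_{k-2}^{k-1}]u$ to lie in the mean-zero space $\breve\P\Lambda^{k-1}(f_j)$. Thus $\int_{f_j}\tr_{f_j} v = 0$ for every $j$, the alternating sum vanishes, and the claim follows. The step I expect to require the most care is the evaluation of $\tr_{f_j} v$: one must combine the precise locality of the operators $E_g^{k-1}$ on equal-dimensional subsimplices with the correct identification of the range of $P_{f_j}^{k-1}$ as the mean-zero space, the latter holding exactly because $\dim f_j$ equals the form degree $k-1$. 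Once this is in place the remaining steps are formal.
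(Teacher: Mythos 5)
Your proof is correct and follows essentially the same route as the paper: reduce membership in $\breve\P\Lambda^k(\T_f)$ to the single condition $\int_f \tr_f dv = 0$, apply Stokes' theorem \eqref{stokes}, identify $\tr_{f_j}v$ with $\tr_{f_j}P_{f_j}^{k-1}(u-\pi_{k-2}^{k-1}u)$ via the recursion and the locality of the extensions, and conclude from the mean-zero range of $P_{f_j}^{k-1}$ on a simplex of dimension equal to the form degree. The paper's proof is just a condensed version of exactly this argument.
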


\begin{proof}
The function $e \equiv d(\pi_{k-1}^{k-1}u - \pi_{k-2}^{k-1}u)$ is
obviously in $\P\Lambda^k(\T_f)$. Therefore, it only remains to show
that
$\int_f \tr_f e = 0$. If $f = [x_0,x_1, \ldots ,x_k]$, then it follows
from the definition of $\pi_{k-1}^{k-1}$ and \eqref{stokes} that
\[
\int_f \tr_f e = \sum_{j= 0}^k(-1)^j
\int_{f_j}\tr_{f_j}P_{f_j}^{k-1}(u - \pi_{k-2}^{k-1}u) = 0.
\]
Here the last identity follows since for $\dim f_j = k-1$,
the projection $P_{f_j}^{k-1}$
projects into a space of functions of mean value zero on $f_j$.
\end{proof}

We conclude with the final result of this section.

\begin{thm}\label{commutes}
The operators $\pi_h^k$ are cochain projections, i.e., $d\pi_h^{k-1} =
\pi_h^kd$ for $k= 1,2, \ldots, n$.
\end{thm}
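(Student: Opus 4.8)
The plan is to work with the family $\pi_m^k$ directly (suppressing $h$, as elsewhere) and to show that the operator commutator $C_m := \pi_m^k d - d\,\pi_m^{k-1}$, acting on $H\Lambda^{k-1}(\Omega)$, vanishes for $m=n$; since $\pi_h^k = \pi_n^k$, this is exactly $d\pi_h^{k-1} = \pi_h^k d$. First I would feed the two recursions \eqref{pimdef}, one for $\pi_m^k$ and one for $\pi_m^{k-1}$, into $C_m$. Using that the trace commutes with $d$, together with the commuting identities \eqref{d-E-commute} for the extension operators (valid since $\dim f = m \ge k$, so the hypothesis of Lemma~\ref{unique-ext1} is met) and \eqref{d-P-commute} for the local projections, each summand $dE_f^{k-1}\tr_f P_f^{k-1}$ is rewritten as $E_f^k\tr_f P_f^k d$. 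The telescoping cancellation then yields the clean one-step recursion
\[
C_m = C_{m-1} - \sum_{f \in \Delta_m(\T)} E_f^k\,\tr_f\,P_f^k\,C_{m-1}, \qquad k \le m \le n.
\]

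Next I would pin down the base level. By construction $\pi_{k-1}^k = R^k$ and $\pi_{k-2}^{k-1} = R^{k-1}$, so the commuting relation \eqref{whitney-commute-r} gives $d\pi_{k-2}^{k-1} = R^k d = \pi_{k-1}^k d$. Subtracting, the base commutator collapses to $C_{k-1} = -\,d(\pi_{k-1}^{k-1} - \pi_{k-2}^{k-1})$, that is, $d$ of the single recursive increment at level $k-1$. This is precisely the quantity controlled by Lemma~\ref{pi-obsevation}: for each $f \in \Delta_k(\T)$ one has $C_{k-1}u|_{\Omega_f} \in \breve\P\Lambda^k(\T_f)$, the local space with mean-zero trace on $f$. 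Moreover, the increment $\pi_{k-1}^{k-1} - \pi_{k-2}^{k-1} = \sum_{g \in \Delta_{k-1}(\T)} E_g^{k-1}\tr_g P_g^{k-1}(I - \pi_{k-2}^{k-1})$ lies in the pure dimension-$(k-1)$ harmonic part $\bigoplus_{g}E_g^{k-1}(\breve\P\Lambda^{k-1}(g))$ of the decomposition \eqref{decomp}; the argument in the proof of Lemma~\ref{invariant}, applied one degree lower, shows that $d$ sends this into the dimension-$k$ part of the $k$-form decomposition. Combined with the mean-zero trace supplied by Lemma~\ref{pi-obsevation}, which kills the Whitney component $\E_f^k(\P_0\Lambda^k(f))$, I conclude $C_{k-1}u \in \bigoplus_{f \in \Delta_k(\T)}E_f^k(\breve\P\Lambda^k(f))$.

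With this structural description the recursion collapses at one stroke. Since $P_f^k$ is the identity on $\breve\P\Lambda^k(\T_f)$, I get $\tr_f P_f^k C_{k-1}u = \tr_f C_{k-1}u$ for $f \in \Delta_k(\T)$; and since $\tr_{f'}\circ E_f^k$ acts as the identity when $f' = f$ and vanishes for $f' \ne f$ of the same dimension (the extension property together with the support property of $E_f^k$ recorded after Lemma~\ref{unique-ext1}), any element of $\bigoplus_{f \in \Delta_k}E_f^k(\breve\P\Lambda^k(f))$ is reproduced by $\sum_{f \in \Delta_k}E_f^k\tr_f(\cdot)$. Hence the $m=k$ instance of the recursion gives $C_k = C_{k-1} - \sum_{f\in\Delta_k}E_f^k\tr_f C_{k-1} = 0$, and then the recursion propagates $C_m = 0$ trivially for every $m > k$, so $C_n = 0$.

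The main obstacle is concentrated entirely at the base level: establishing that $C_{k-1}$ takes values in $\bigoplus_{f\in\Delta_k}E_f^k(\breve\P\Lambda^k(f))$. This is the step where the machinery of Section~\ref{sec:whitney}, distilled into the commuting identity \eqref{whitney-commute-r} and the mean-value statement of Lemma~\ref{pi-obsevation}, becomes indispensable; without the special Whitney projection $R^k$ the increment $d(\pi_{k-1}^{k-1}-\pi_{k-2}^{k-1})$ would fail to have mean-zero trace on the $k$-simplices, and the first correction step could not eliminate it. Everything else—the telescoping that produces the recursion for $C_m$, and the reconstruction identity for the harmonic extensions—is routine bookkeeping resting on the commuting properties of the local building blocks.
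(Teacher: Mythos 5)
Your proof is correct and follows essentially the same route as the paper: both arguments use the commuting identities \eqref{d-E-commute} and \eqref{d-P-commute} to reduce the whole induction to the base level $m=k$, and both dispose of that base level by combining \eqref{whitney-commute-r}, Lemma~\ref{pi-obsevation}, and the fact that $P_f^k$ acts as the identity on $\breve\P\Lambda^k(\T_f)$ while the extensions $E_f^k$ reproduce the dimension-$k$ part of the decomposition \eqref{decomp}. Your repackaging of the induction as a one-step recursion for the operator commutator $C_m$, with Lemma~\ref{invariant} invoked once to pin down the structure of $C_{k-1}u$ rather than at every inductive step, is a slightly tidier organization of the same argument.
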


\begin{proof}
We will prove that for $u \in H\Lambda^{k-1}(\Omega)$, and $1 \le k \le n$,
\begin{equation}\label{d-tr_pi-commute}
\tr_f \pi_{m}^k du =  \tr_f
d\pi_{m}^{k-1} u,  \quad \text{if } f \in
\Delta(\T_h),\, k \le \dim f \le m,
\end{equation}
for $m = k,k+1, \ldots ,n$. As above, the case $m= n$ implies the desired
result.
We note that it follows from Lemma~\ref{invariant}
that if \eqref{d-tr_pi-commute} holds for any $k \le m \le n$, then
$\pi_m^kdu = d\pi_m^{k-1}u$.
 
The identity \eqref{d-tr_pi-commute} will
be established by induction on $m$, starting from $m=k$.
By \eqref{d-P-commute} and \eqref{pi_m-recursion} we have, 
for any $f \in \Delta_{k}(\T_h)$,
\[
\tr_f \pi_{k}^{k} du = \tr_f[\pi_{k-1}^{k}du
+ P_{f}^{k}(du - \pi_{k-1}^{k}du)] = d \tr_f P_{f}^{k-1}u + \tr_f(I -
P_{f}^{k})\pi_{k-1}^{k}du.
\]
On the other hand,
\begin{align*}
d \tr_f \pi_{k}^{k-1}u &=  d \tr_f P_{f}^{k-1}u + d \tr_f(I -
P_{f}^{k-1})\pi_{k-1}^{k-1} u\\
&= d \tr_f P_{f}^{k-1}u + \tr_f(I -
P_{f}^{k})d\pi_{k-1}^{k-1}u.
\end{align*}
By comparing the two expressions, and utilizing
\eqref{whitney-commute-r}, we obtain
\begin{align*}
\tr_f (\pi_{k}^{k} du -d\pi_{k}^{k-1}u) &= \tr_f (I -
P_{f}^{k})(\pi_{k-1}^kdu - d\pi_{k-1}^{k-1}u)\\
&= \tr_f (I -
P_{f}^{k})(d\pi_{k-2}^{k-1} u - d\pi_{k-1}^{k-1}u) = 0,
\end{align*}
where the last identity is a consequence of Lemma~\ref{pi-obsevation}.
So \eqref{d-tr_pi-commute} holds for $m= k$.

Assume next that \eqref{d-tr_pi-commute} holds for $m$ replaced by
$m-1$. As we observed above, this implies that 
$\pi_{m-1}^k du = d \pi_{m-1}^{k-1}u$.
Furthermore, by \eqref{pi-preserve} it follows that the operators 
$\pi_m^{k-1}$ and
$\pi_m^{k}$ satisfy \eqref{d-tr_pi-commute} for all $f \in \Delta(\T)$
with $k \le \dim f \le m-1$. Finally, 
for $f \in \Delta_m(\T)$ we have by \eqref{d-P-commute}
and \eqref{pi_m-recursion} that
\begin{align*}
\tr_f \pi_{m}^k du &= \tr_f [P_{f}^k(du - \pi_{m-1}^kdu)
+ \pi_{m-1}^k du] \\
&=  \tr_fd [P_{f}^{k-1}(u - \pi_{m-1}^{k-1}u)  +
\pi_{m-1}^{k-1}u] =  \tr_f d \pi_{m}^{k-1} u.
\end{align*}
This completes the proof.
\end{proof}

\section{Local bounds}
\label{bounds}
The purpose of this section is to derive local bounds for 
the projections $\pi_h^k$ constructed above. The main technique we
will use is scaling, a standard technique in the analysis of 
finite element methods.
The arguments below resemble parts of the discussion given
in \cite[Section 5.4]{acta}, where scaling is used in a slightly 
different setting.

>From the construction above, it follows that the operators $\pi_h^k$ are local
operators.
In fact, we observed in Section~\ref{sec:whitney} that
the operator $\pi_{k-1,h}^k= R_h^k$ has the property that 
$\tr_f \circ \pi_{k-1,h}^ku$ 
only depends on $u|_{\Omega_f^e}$.
As a consequence, $(\pi_{k-1,h}^ku)|_T$ only depends on $u$ restricted
to 
\[
\cup_{f \in \Delta_k(T)}\Omega_f^e \subset \Omega_T^e = D_{0,T}
\subset D_{k-1,T}
\]
for $T \in \T_h$ and $0 < k \le n$. Here we recall that the local 
domains  $D_{m,T}$ and $D_T = D_{n,T}$ are defined by \eqref{D-recursion}.
Therefore it follows by \eqref{D-recursion}, \eqref{pi_m-recursion}, 
and the local properties of the operators $P_f^k$ and $E_f^k$, that the
operator $\pi_h^k$ has the property that $(\pi_h^ku)|_T$ only depends
on $u|_{D_T}$ for
any $T \in \T_h$, $0 \le k \le n$. 
Furthermore, for each $h$ the operator $\pi_h^k$
is a bounded operator in $H\Lambda^k(\Omega)$. Hence, for each $h$ and 
each $T \in \T_h$ there is a constant $c=c(h,T)$ such that
\begin{equation}\label{local-bound-h}
\|\pi_h^k u \|_{L^2\Lambda^k(T)} \le c(h,T)\, (\| u \|_{L^2\Lambda^k(D_T)} + \|du
\|_{L^2\Lambda^{k+1}(D_T)}), \quad u \in H\Lambda^k(D_T).
\end{equation}
Our goal in this section is to improve this result by establishing
the uniform bound
\begin{equation}\label{local-bound-u}
\|\pi_h^k u \|_{L^2\Lambda^k(T)} \le C\, (\| u \|_{L^2\Lambda^k(D_t)} + h_T\|du
\|_{L^2\Lambda^{k+1}(D_T)}), \quad u \in H\Lambda^k(D_T),
\end{equation}
for $0 \le k \le n$, where the constant $C$ is independent of $h$ and $T$.
Since the operators $\pi_h^k$ commute with the exterior derivative,
the estimate \eqref{local-bound-u} will also imply that
\begin{equation}\label{local-bound-u-d}
\|d\pi_h^k u \|_{L^2\Lambda^k(T)} \le C \, \| du \|_{L^2\Lambda^k(D_T)},
\quad u \in H\Lambda^k(D_T),
\end{equation}
for $0 \le k <n$,
with the same constant $C$ as in \eqref{local-bound-u}. 
Therefore, the estimate \eqref{local-bound-u} will, in particular, imply
the bounds given in Theorem~\ref{bound0}.

The rest of this section will be used to prove the estimate 
\eqref{local-bound-u}.
For any fixed $T \in \T_h$, we introduce the scaling $\Phi_T(x) =
(x-x_0)/h_T$,
where $x_0$ is a vertex of $T$. We let $\hat T = \Phi_T(T)$ and $\hat
D_T = \Phi_T(D_T)$ be the corresponding reference domains
with size of order one. The restriction of the triangulation $\T_h$ 
to $D_T$ will be denoted $\T_h(D_T)$,
and $\hat \T_h(D_T)$ the induced triangulation  on $\hat D_T$.
In general we will use the hat notation to denote scaled versions of 
domains and local triangulations, for example $\hat f = \Phi_T(f)$, 
$f \in \Delta(\T_h)$.
We note that the pullback, $\Phi_T^*$ maps $H\Lambda^k(\hat
D_T)$
to $H\Lambda^k(D_T)$. Furthermore, it follows from the definition
of pullbacks that
\begin{equation}\label{pullback}
\| \Phi_T^*u \|_{L^2\Lambda^k(D)} = h_T^{-k + n/2}\| u
  \|_{L^2\Lambda^k(\hat D)}, \quad u \in L^2\Lambda^k(\hat D),
\end{equation}
where $D \subset D_T$ and $\hat D = \Phi_T(D)$.
We will obtain bounds for the
operator $\pi_h^k$, considered as a local operator mapping
$H\Lambda^k(D_T)$
to $H\Lambda^k(T)$, by studying the operator
$\Phi_T^{*-1}\pi_h^k\Phi_T^*$
as an operator mapping $H\Lambda^k(\hat
D_T)$ to $H\Lambda^k(\hat T)$.
In fact, since the since the pullbacks commute with the exterior
derivative,
it follows from \eqref{pullback}
that 
\begin{align}\label{scaling}
\|\pi_h^k u \|_{L^2\Lambda^k(T)}&= \|\Phi_T^{*-1}\pi_h^ku
\|_{L^2\Lambda^k(\hat T)}h_T^{-k+n/2}\nonumber\\
&\le \|\Phi_T^{*-1}\pi_h^k\Phi_T^{*}\|\,  
h_T^{-k+n/2}(\|\Phi_T^{*-1}u\|_{L^2\Lambda^k(\hat T)}
+ \|\Phi_T^{*-1}du\|_{L^2\Lambda^{k+1}(\hat T)})\\
&\le \|\Phi_T^{*-1}\pi_h^k\Phi_T^{*}\|\, (\|u \|_{L^2\Lambda^k(D_T)} + h_T \|du
\|_{L^2\Lambda^{k+1}(D_T)}),\nonumber
\end{align}
where $\|\Phi_T^{*-1}\pi_h^k\Phi_T^{*}\|$ denotes the operator norm  
in $\Lin(H\Lambda^k(\hat
  D_T), L^2\Lambda^k(\hat
  T))$.
Note that if we can show that this operator norm is uniformly bounded
with respect to $h$ and $T \in \T_h$, then \eqref{scaling}
will imply the desired bound
\eqref{local-bound-u}.
The following result is the key tool for this verification.

\begin{lem}\label{reference}
The operator $\Phi_T^{*-1}\pi_h^k\Phi_T^{*}$ can be identified with  
the operator $\hat \pi^k \in \Lin(H\Lambda^k(\hat
  D_T), H\Lambda^k(\hat
  T))$ obtained by constructing the operator $\pi^k$ with respect to 
the triangulation $\hat \T_h(D_T)$ of $\hat D_T$.
\end{lem}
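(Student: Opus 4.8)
The plan is to show that every ingredient in the recursive construction of $\pi_h^k$ is equivariant with respect to the affine pullback $\Phi_T^*$, so that conjugation by $\Phi_T^*$ carries each local operator built on $\T_h(D_T)$ to the corresponding operator built on $\hat\T_h(D_T)$, and then to propagate these equivariances along the recursion \eqref{pimdef}. First I would record the structural properties of $\Phi_T$. Since $\Phi_T$ is an orientation-preserving affine bijection of $D_T$ onto $\hat D_T$, it maps $\T_h(D_T)$ simplexwise onto $\hat\T_h(D_T)$, so each $f \in \Delta_m(\T_h(D_T))$ corresponds to $\hat f = \Phi_T(f)$, each macroelement $\Omega_f$ to $\Omega_{\hat f} = \Phi_T(\Omega_f)$, and each extended macroelement $\Omega_f^e$ to $\Omega_{\hat f}^e$, with all orientations preserved. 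The pullback $\Phi_T^*$ commutes with the exterior derivative and with the trace operators $\tr_f$ (the face pullbacks cancelling in the composition $E_f^k\circ\tr_f\circ P_f^k$ used in \eqref{pimdef}), and respects the wedge product; moreover, because the polynomial classes $\P_r\Lambda^k$ and $\P_r^-\Lambda^k$ are affine invariant, $\Phi_T^*$ maps $\P\Lambda^k(\hat\T_{\hat f})$ onto $\P\Lambda^k(\T_f)$, and likewise for the $\0\P\Lambda$, $\breve\P\Lambda$, and Whitney subspaces.

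The central observation is that although the $L^2$ pairing is not preserved but merely rescaled according to \eqref{pullback}, every defining relation used in the construction is scale invariant. Each variational equation characterising $P_f^k$, $E_f^k$, $Q_f^k$, and $Q_{f,-}^k$ equates two $L^2$ inner products of forms of one fixed degree (either the $k$-forms themselves or their $(k+1)$-form exterior derivatives); the common factor $h_T^{-2k+n}$, respectively $h_T^{-2(k+1)+n}$, produced by \eqref{pullback} appears on both sides and cancels, so each such equation holds for the pulled-back data exactly when it holds for the original data. The mean-value and normalisation conditions are likewise invariant, since the integral of a form over a simplex is preserved by pullback: in particular $\int_f u$, the pairing $\int_f \tr_f u$, and the normalisation $\int_{\Omega_f}\vol_{\Omega_f}=1$ all transfer verbatim, with $\Phi_T^*\widehat{\vol}_{\hat f}=\vol_f$. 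Consequently $\Phi_T^{*-1}P_f^k\Phi_T^* = \hat P_{\hat f}^k$ and $\Phi_T^{*-1}E_f^k\Phi_T^* = \hat E_{\hat f}^k$, and the analogous identities hold for $\E_f^k$, $\tilde E_f^k$, $Q_f^k$, and $Q_{f,-}^k$.

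I would then treat the special Whitney projection $R^k = \pi_{k-1}^k$, the most delicate ingredient because its construction couples different macroelements through $\delta$ and the weight functions $z_f^k$. Since $\delta$ is defined purely combinatorially from the oriented faces $f_j$, and $\Phi_T$ preserves this face structure together with its orientations, $\delta$ intertwines with $\Phi_T^*$; as $d$ does too, the defining recursion \eqref{induction-hyp2}, the gauge condition \eqref{addconds} (again an $L^2$-orthogonality between forms of a single degree), and the initialisation $z_f^0 = \vol_{\Omega_f}$ are all invariant, so the uniqueness in Lemma~\ref{well-defined} gives $\Phi_T^* \hat z_{\hat f}^k = z_f^k$. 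Because $M^k$, $S^k$, and $R^k$ are assembled from these $z_f^k$, the operators $\E_f^k$, the traces, and the invariant top-degree integrals $\int_{\Omega_f^e} u \wedge z_f^k$, an induction on $k$ through their defining formulas yields $\Phi_T^{*-1}R^k\Phi_T^* = \hat R^k$.

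Finally I would assemble the full result by induction on the recursion \eqref{pimdef}. The base case $\pi_{k-1}^k = R^k$ conjugates to $\hat R^k = \hat\pi_{k-1}^k$ by the previous step. For the inductive step, assuming $\Phi_T^{*-1}\pi_{m-1}^k\Phi_T^* = \hat\pi_{m-1}^k$, I insert the established identities $\Phi_T^{*-1}E_f^k\Phi_T^* = \hat E_{\hat f}^k$ and $\Phi_T^{*-1}P_f^k\Phi_T^* = \hat P_{\hat f}^k$ into \eqref{pimdef}; since the sum over $f \in \Delta_m(\T_h(D_T))$ corresponds bijectively to the sum over $\hat f \in \Delta_m(\hat\T_h(D_T))$ and the face pullbacks cancel, conjugation commutes termwise with the recursion, giving $\Phi_T^{*-1}\pi_m^k\Phi_T^* = \hat\pi_m^k$. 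Taking $m = n$ identifies $\Phi_T^{*-1}\pi_h^k\Phi_T^*$ with $\hat\pi^k$, as claimed. The main obstacle is the bookkeeping for $R^k$ in the previous paragraph, namely verifying the equivariance of the \v{C}ech--de Rham coupling and of the uniquely determined weights $z_f^k$, rather than any single analytic estimate; once equivariance of each building block is in hand, the assembly is purely formal.
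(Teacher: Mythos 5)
Your proposal is correct and follows essentially the same route as the paper: affine invariance of the polynomial spaces, cancellation of the $h_T$ scaling factors in each single-degree variational equation defining $Q_f^k$, $Q_{f,-}^k$, $P_f^k$, $E_f^k$ and $\E_f^k$, equivariance of $R^k$ via the weights $z_f^k$, and finally induction through the recursion \eqref{pimdef}. The only minor difference is that you derive $\Phi_T^*\hat z_{\hat f}^k = z_f^k$ from the uniqueness statement of Lemma~\ref{well-defined}, whereas the paper simply takes the reference weights to be $\Phi_T^{*-1}z_f^k$ by definition; your version supplies a detail the paper glosses over.
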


\begin{proof}
We have to show that the operators $\pi_h^k$ and $\hat \pi^k$
satisfies $\pi_h^k \Phi_T^* = \Phi_T^* \hat \pi^k$.
In fact, the proof just consist of checking that the pullback
$\Phi_T^*$
commutes properly with the operators used to construct $\pi^k$.
A key property of the polynomial spaces
$\P\Lambda^k$ is that they are affine invariant. Therefore, in
particular, we will
have that the spaces $\P\Lambda^k(\T_h(D_T)) =
\Phi_T^*\P\Lambda^k(\hat \T_h(D_T))$.
As a consequence of this we also obtain that the local projections
$Q_{f}^k$, defined with respect to the extended macroelements
$\Omega_f^e$, satisfies
\begin{equation}\label{pullback-Q}
\Phi_T^*Q_f^k = \hat Q_f^k \Phi_T^*, \quad f \in \Delta_k(\T_h(D_T)),
\end{equation}
with the obvious interpretation of $\hat Q_f^k$ as the corresponding
projections defined with respect to the domain $\hat \Omega_f^e =
\Phi_T^*(\Omega_f^e)$. A corresponding property holds for for the
extension operators $\E_f^k$, i.e.,
$\E_f^k \phi_T^* = \Phi_T^* \hat \E_f^k$,
where $\hat \E_f^k$ maps $\P_0\Lambda^k(\hat f)$ to
$\0\P_1^-\Lambda^k(\hat \T_{f,h})$.
In particular,
\begin{equation}\label{pullback-vol}
\E_f^k \vol_{f} = \Phi_T^* \hat \E_f^k \Phi_T^{*-1}\vol_f = \Phi_T^*
\hat \E_f^k \vol_{\hat f}.
\end{equation}
 
Consider the operator $S_h^0\Phi_T^*$, where $S_h^k$ are the
operators introduced in Section~\ref{sec:whitney} above. By
\eqref{pullback-vol} we have, for any $u \in
H\Lambda^k(\hat D_T)$, 
\begin{align}\label{pullback-S0}
S_h^0\Phi_T^*u &= \sum_{f \in \Delta_{0}(\T_h(D_T))} 
\left(\int_{\Omega_f} \Phi_T^*u \wedge \vol_{\Omega_f}
\right) \E_f^{0}\vol_f\nonumber\\
&= \sum_{f \in \Delta_{0}(\T_h(D_T))} 
\left(\int_{\Omega_f} \Phi_T^*(u \wedge \Phi_T^{*-1}\vol_{\Omega_f}
\right) \E_f^{0}\vol_f \\
&= \sum_{f \in \Delta_{0}(\T_h(D_T))} \left(\int_{\hat \Omega_f} (u
  \wedge \vol_{\hat \Omega_f}
\right) \Phi_T^* \hat \E_f^{0}\vol_{\hat f} = \Phi_T^* \hat S^0u.\nonumber
\end{align}
In general, we define the operators $\hat S^k$ with respect to 
the reference domain $\hat D_T$ as outlined in
Section~\ref{sec:whitney}.
In particular, the weight
functions $\hat z_f^k$ are taken to be $\Phi_T^{*-1}z_f^k$.
It follows essentially from \eqref{pullback-Q}, and an argument 
similar to one leading to \eqref{pullback-S0},
that $S_h^k \Phi_T^* = \Phi_T^* \hat S^k$, and this further leads to 
\begin{equation}\label{pullback-R}
\pi_{k-1,h}^k \Phi_T^* = R_h^k \Phi_T^* = \Phi_T^* \hat R_h^k = \Phi_T^*
\hat \pi_{k-1,h}^k.
\end{equation}

It is also straightforward to check that the local projections 
$P_f^k$ and the extension operators $E_f^k$
satisfy the corresponding properties $P_f^k \Phi_T^* = \Phi_T^* \hat P_f^k$
and $E_f^k \Phi_T^* = \Phi_T^* \hat E_f^k$, which implies that
\[
E_f^k \tr_f P_f^k \Phi_T^* = \Phi_T^* \hat E_f^k \tr_{\hat f}\hat
P_f^k.
\]
By combining this with the recursion \eqref{pi_m-recursion} and 
\eqref{pullback-R}, we obtain
the relation $\pi_{m,h}^k \Phi_T^* = \Phi_T^* \hat \pi_m^k$ for $k
\le m \le n$. In particular, the desired relation $\pi_h^k \Phi_T^* =
\Phi_T^* \hat \pi^k$ is obtained for $m=n$.
\end{proof}
 
We now have the following main result of this section.

\begin{thm}\label{bound}
The operators $\pi_h^k$ satisfy the bounds \eqref{local-bound-u}
and \eqref{local-bound-u-d}, where the constant C is independent of 
$h$ and $T \in \T_h$.
\end{thm}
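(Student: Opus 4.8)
By \eqref{scaling} and Lemma~\ref{reference}, the estimate \eqref{local-bound-u} will follow once the operator norm of $\hat\pi^k$ in $\Lin(H\Lambda^k(\hat D_T), L^2\Lambda^k(\hat T))$ is shown to be bounded by a constant independent of $h$ and of $T \in \T_h$. The plan is to establish this uniform bound by a continuity-and-compactness argument: shape regularity will reduce the infinite family of reference configurations $\hat\T_h(D_T)$ to finitely many combinatorial types carrying a compact, nondegenerate geometry, and the whole construction depends continuously on that geometry.

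First I would record that $\hat\pi^k$ is assembled from a uniformly bounded number of local building blocks, each attached to one of the reference macroelements $\hat\Omega_f$ or $\hat\Omega_f^e$: the local projections $\hat Q_f^k$, $\hat Q_{f,-}^k$, $\hat P_f^k$; the extension operators $\hat\E_f^k$ and $\hat E_f^k$; and the weight functions $\hat z_f^k \in \0\P_1^-\Lambda^{n-k}(\hat\T_f^e)$. Tracing through the recursion \eqref{pimdef} together with the construction of $\hat R^k = \hat\pi_{k-1}^k$ in Section~\ref{sec:whitney}, one sees that $\hat\pi^k u$ is produced by finitely many compositions, sums, and traces of these blocks. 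Each block is a bounded operator (respectively, a fixed differential form) on any single reference configuration, so it suffices to bound the constituent norms uniformly.

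The uniformity would come from shape regularity. After scaling every $\hat D_T$ has diameter of order one, and shape regularity bounds both the number of simplices in $\hat D_T$ and their aspect ratios away from degeneracy; hence the reference triangulations fall into finitely many combinatorial types, and within each type the vertex coordinates range over a compact subset of Euclidean space bounded away from the locus where a simplex collapses. All constituent operators depend continuously on these coordinates. The projections $\hat Q_f^k$ and $\hat P_f^k$ are the solution operators of mixed systems whose inf-sup constants are positive on each nondegenerate configuration by the exactness of the relevant complexes (Lemma~\ref{hat-complex-exact} and the exact complexes on the macroelements), hence are bounded below uniformly by continuity and compactness; the weight functions $\hat z_f^k$, defined recursively through \eqref{induction-hyp2} and \eqref{addconds}, depend continuously on the geometry, so an induction on $k$ yields a uniform bound on $\|\hat z_f^k\|$; the extension operators are treated the same way. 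Taking the maximum of these bounds over the finitely many types gives a single constant $C$ with $\|\hat\pi^k\| \le C$ uniformly, and \eqref{scaling} then delivers \eqref{local-bound-u}.

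Finally, \eqref{local-bound-u-d} would follow from \eqref{local-bound-u} and the commuting property $d\pi_h^k = \pi_h^{k+1}d$ proved in Theorem~\ref{commutes}: applying \eqref{local-bound-u} at level $k+1$ to the form $du$ and using $d(du)=0$ gives $\|d\pi_h^k u\|_{L^2\Lambda^{k+1}(T)} = \|\pi_h^{k+1} du\|_{L^2\Lambda^{k+1}(T)} \le C\|du\|_{L^2\Lambda^{k+1}(D_T)}$. The hard part will be the uniformity across the reference family, namely verifying that the inf-sup constants governing $\hat P_f^k$ and $\hat Q_f^k$ stay bounded below and that the $\hat z_f^k$ stay bounded, uniformly over all reference triangulations arising from the mesh family; the reduction to finitely many combinatorial types with compact nondegenerate geometry is the technical heart that makes the continuity argument rigorous.
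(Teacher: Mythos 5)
Your proposal follows essentially the same route as the paper: scale to the reference configuration via \eqref{scaling} and Lemma~\ref{reference}, invoke shape regularity to conclude that the reference triangulations $\hat\T_h(D_T)$ vary over a compact set so that the constant in \eqref{local-bound-h} is uniformly bounded, and then obtain \eqref{local-bound-u-d} from \eqref{local-bound-u} and the commuting property. The paper states the compactness step more tersely, while you flesh out the continuity of the constituent operators in the geometry, but the argument is the same.
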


\begin{proof}
It follows from \eqref{local-bound-h} that for each $h$ and $T$,
there is constant $C(h,T)$ such that
\begin{equation}\label{local-bound-h2}
\|\hat \pi^k u \|_{L^2\Lambda^k(\hat T)} \le C(h,T)\| u
\|_{H\Lambda^k(\hat D_T)}, \quad u \in H\Lambda^k(\hat D_T),
\end{equation}
where, as above,  $\hat \pi^k$ is obtained by constructing the
operator $\pi^k$ with respect to 
the triangulation $\hat \T_h(D_T)$ of $\hat D_T$.
However, due to the assumption of shape regularity of the family
$\{\T_h\}$,
it follows that the induced triangulations $\hat \T_h(D_T)$ varies over a
compact set. Therefore, the constant $C(h,T)$ is uniformly
bounded with respect to $h$ and $T \in \T_h$.
The desired estimate \eqref{local-bound-u} now follows from 
Lemma~\ref{reference}, combined with \eqref{scaling}
and \eqref{local-bound-h2}. Finally, 
as we observed above, \eqref{local-bound-u-d} follows from
\eqref{local-bound-u}
and the fact that the projections $\pi_h^k$ commutes with $d$.
\end{proof} 

Finally, we observe that since the since shape regularity of the
triangulation $\{\T_h \}$ implies that the the covering $\{D_T \}$ 
of $\Omega$ has
a bounded overlap property, it follows from the bounds 
\eqref{local-bound-u} and \eqref{local-bound-u-d} that the global
estimates 
\[
\|\pi_h^k u \|_{L^2(\Omega)} \le C\, (\| u \|_{L^2(\Omega)} + h\|du
\|_{L^2(\Omega)})
\]
and 
\[
\|d\pi_h^k u \|_{L^2(\Omega)} \le C\, 
(\| du \|_{L^2(\Omega)}, \quad u \in H\Lambda^k(\Omega),
\]
where $C$ is independent of $h$, also holds.

\subsection*{Acknowledgement} The second author is grateful to Snorre
H. Christiansen for many useful discussions.

\end{document}